\def\R{\mathbb{R}}
\def\a{\alpha}
\def\b{\beta}
\def\k{\chi}
\def\ii{\int_0^\infty}
\def\be{\begin{equation}}
\def\ee{\end{equation}}
\def\C{\mathcal{C}}
\newtheorem{lemma}{\bf Lemma}[section]
\newtheorem{theorem}{\bf Theorem}[section]
\newtheorem{proposition}{\bf Proposition}[section]
\numberwithin{equation}{section}
\newcommand{\abs}[1]{\left\vert#1\right\vert}
\newcommand{\norm}[1]{\left\lVert#1\right\rVert}
\begin{document}
\author{Jingyu Li}
\address{School of Mathematics and Statistics, Northeast Normal University, Changchun, 130024, P R China}
\email{lijy645@nenu.edu.cn}

\author{Zhian Wang}
\address{Department of Applied Mathematics, Hong Kong Polytechnic University, Hung Hom, Kowloon, Hong Kong}
\email{mawza@polyu.edu.hk}
\title[Convergence to traveling waves of a PDE-ODE chemotaxis system]{Convergence to traveling waves of a singular PDE-ODE hybrid chemotaxis system in the half space}

\begin{abstract}
This paper is concerned with the asymptotic stability of the initial-boundary value problem of a singular PDE-ODE hybrid chemotaxis system in the half space $\R_+=[0, \infty)$. We show that when the non-zero flux boundary condition at $x=0$ is prescribed and the initial data are suitably chosen, the solution of the initial-boundary value problem converges, as time tend to infinity, to a shifted traveling wavefront restricted in the half space  $[0,\infty)$ where the wave profile and speed are uniquely selected by the boundary flux data. The results are proved by a Cole-Hopf type transformation and weighted energy estimates along with the technique of taking {\color{black} the} anti-derivative.
\end{abstract}

\subjclass[2000]{35A01, 35B40, 35K57, 35Q92, 92C17}

\keywords{Singular chemotaxis, shifted traveling waves, half space, boundary layer effect, convergence}
\maketitle

\section{Introduction}

This paper is concerned with the following PDE-ODE hybrid chemotaxis model
\begin{equation}\label{omn}
\begin{cases}
u_t=[Du_x-\xi u (\ln c)_x]_x, \\
c_t=-\mu uc,
\end{cases}
\end{equation}
which was proposed in \cite{LSN} to describe interaction between signaling molecules vascular endothelial growth factor (VEGF) and vascular endothelial cells during the initiation of tumor angiogenesis (see also in \cite{CPZ1, CPZ3}), where $u(x,t)$ and $c(x,t)$ denote the density of vascular endothelial cells and concentration of VEGF, respectively. The parameter $D$ denotes the cell diffusion rate, $\xi>0$ is referred to as the chemotactic coefficient measuring the strength of chemotaxis and $\mu>0$ denotes the degradation rate of the chemical VEGF. Note that the chemical diffusion is neglected since it is far less important than its interaction with endothelial cells (cf. \cite{LSN}). Among other things, the system \eqref{omn} can be regarded as a particular form of the well-known Keller-Segel system (cf. \cite{KS}) describing the propagation of traveling wave band formed by bacterial chemotaxis observed in the experiment of Adler \cite{Adler} where $u$ denotes the bacterial density and $c$ the concentration of nutrients. A distinguishing feature of the chemotaxis system \eqref{omn} lies in the singular logarithmic sensitivity based on the Weber-Fechner law (cf. \cite{KS}). The mathematical derivation such kind of chemotaxis models has been previously given in \cite{Othmer97, Levine97}.

As a major phenomenon observed in the various experiments for chemotaxis (cf. \cite{Adler, Gold, Welch}), the traveling wave has become one of the most genuinely interesting research topics in chemotaxis and a large amount of pivotal  theoretical works have been developed (cf. \cite{Horst2, Horst-Stev, wang12, Salako-Shen1, Salako-Shen2, Nadin, Ou}). By convention, traveling wave solution of $\eqref{omn}$ is a non-constant smooth solution over $\R=(-\infty,+\infty)$ in a self-similar ansatz in the form of
\begin{equation*}
(u,c)(x,t)=(U,\C)(z),\ z=x-st, \ x\in (-\infty, \infty)
\end{equation*}
with (general) boundary conditions (i.e. asymptotic states)
\begin{equation*}\label{twbc}
U(\pm\infty)=u_\pm,\ \C(\pm\infty)=c_\pm,\  U'(\pm\infty)=\C'(\pm\infty)=0,
\end{equation*}
where $'=\frac{d}{dz}$, $z$ is called the moving coordinate and $s>0$ is the wave speed. In addition the asymptotic states $u_\pm\geq0$,  $c_\pm\geq0$ due to their biological nature.
Upon a substitution of {\color{black} the} above wave ansatz into (\ref{omn}), $(U,\C)$ satisfies
\begin{equation}\label{tw}
\begin{cases}
-sU'+\xi \left(U(\ln\C)'\right)'=DU'',\\
s\C'=\mu U\C.
\end{cases}
\end{equation}
In view of the second equation of \eqref{tw} and the fact $\mu>0$, we have $\C'\geq0$, which implies $c_+>0$. This, along with $\C'(+\infty)=0$, yields $u_+=0$. With these observations and the special structure of ODE system \eqref{tw}, one can solve the solution $(U, \C)$ explicitly as (see \cite{wang12})
\begin{equation}\label{expression}
U(z)=\frac{\frac{s^2}{\xi\mu}}{\kappa e^{\frac{s}{D}z}+1}, \ \ \C(z)=c_+\left(\frac{\kappa e^{\frac{s}{D}z}}{\kappa e^{\frac{s}{D}z}+1}\right)^{\frac{D}{\xi}}
\end{equation}
with a unique wave speed $s=(\xi \mu u_-)^{1/2}$ for given $u_->0$. {\color{black} Note that the solution given in \eqref{expression} is unique up to a translation, where $\kappa>0$ corresponds to a translation constant. Therefore hereafter we shall assume $\kappa=1$ without loss of generality (otherwise we consider a shifted solution $(U(z+\tau), \mathcal{C}(z+\tau))$ with $\tau=-\frac{D}{s}\ln \kappa$}. Then the following results can be immediately verified:
\begin{equation}\label{property}
U'<0,\ \C'>0 \text{ for }z\in(-\infty,+\infty), \ u_+=0,\ c_+>0, \ u_->0,\ c_-=0.
\end{equation}
From the afore-mentioned results, we find that there are two free (arbitrary) parameters $u_-$ and $c_+$, which in turn indicates that the system \eqref{omn} may have infinite many traveling wave profiles over $(-\infty, \infty)$. This is, however,  unrealistic for a practical problem. Hence one immediate question concerned is how the free  parameters $u_-$ and $c_+$ can be determined ? Since the above traveling wave solution in the whole space $\R=(-\infty, \infty)$ propagates from the left to the right, it has been assumed that the source/force driving the traveling waves comes from the place at $-\infty$, which is not physical since the site where the source/force is placed could not be infinitely far for a realistic problem. Most of (if not all) real experiments are indeed completed in various confined apparatuses.  For instance, the experiment finding the chemotactic traveling waves of bacterial in the celebrated work \cite{Adler} was performed in a capillary tube, and the one observing the rippling wave patterns of myxobacteria was done in a gasket apparatus \cite{Welch}.  This motivates us to consider the problem in the half space with a physical boundary and see if the free parameters appearing in the traveling wave profiles  can be determined by the prescribed boundary conditions, and furthermore investigate whether the solution of the resulting initial-boundary value problem converges to any traveling wave profile (i.e. stability of traveling waves).   Toward this end,  in this paper, we consider the system \eqref{omn} for $(x,t) \in \R_+ \times \R_+$ where $\R_+=[0, \infty)$, with the following initial and boundary conditions
\begin{equation}\label{omn-init}
(u,c)(x,0)=(u_0,c_0)(x) \to (0, c_+) \ \text{as} \ x \to \infty,
\end{equation}
\begin{equation}\label{omn-bound}
(Du_{x}-\xi u(\ln c)_x)(0,t)=\eta(t), \ t\in\R_+
\end{equation}
where $c_+>0$ and $\eta(t)$ is a function of time $t$ convergent at $\infty$:
\begin{equation}\label{eta}
\eta(t)\rightarrow\eta_- \ne 0 \ \text{ as }t\rightarrow\infty.
\end{equation}
Note that no boundary condition is imposed for the solution component $c$ since its equation is just an ODE without a spatial structure. The boundary condition \eqref{omn-bound} means that cell density has a non-zero flux across the boundary. This is indeed the case in view of the process that the model \eqref{omn} describes: migration of vascular endothelial cells (denoted by $u$) towards the signaling molecules VEGF (cf. \cite{LSN}) (denoted by $c$) where the vascular endothelial cells come from the blood in the vessel by crossing the vessel wall. Hence if we regard the vessel wall as the physical boundary of our problem, then the non-zero flux boundary as \eqref{omn-bound} is a natural choice.  It is the purpose of this paper to exploit the asymptotic behavior of solutions of \eqref{omn} with \eqref{omn-init}-\eqref{omn-bound}. Specifically we shall prove that the solution of \eqref{omn} with \eqref{omn-init}-\eqref{omn-bound} on $[0,\infty)$  will approach the traveling wave solution of \eqref{omn} in $(-\infty, \infty)$ restricted on $[0,\infty)$ where the asymptotic state $u_-$ and wave speed will be uniquely determined by the asymptotic boundary datum $\eta_-$ and the parameter $c_+$ is nothing but the asymptotic state of initial value $c_0$ as $x \to \infty$. In other words, though the system \eqref{omn} has infinite many traveling waves profiles over $(-\infty, \infty)$, the solution of \eqref{omn} over half space with initial-boundary conditions \eqref{omn-init}-\eqref{omn-bound} will converge to a uniquely selected traveling wavefront profile.
Our results will not only address the determination of free parameters, but also assert that the movement of vascular endothelial cells in the initiation of tumor angiogenesis (cf. \cite{LSN}) can stabilize into a traveling wavefront profile with a unique wave speed.

To exploit the stabilization problem depicted above, one has to overcome some obstacles appearing in the analysis of the model (\ref{omn}). One is the singularity at ${c=0}$ in the first equation of \eqref{omn}. This singularity has its biological and mathematical grounds (cf. \cite{Othmer97, Kalinin}) and is irreplaceable to generate traveling wave patterns (cf. \cite{Lui-Wang}) although it brings great challenges to the stability analysis of traveling wave solutions. The other is the second equation of (\ref{omn}) is an ODE lacking a spatial structure and as a result the regularity of solution component $c$ may be problematic.   Hence working on the system \eqref{omn} directly will be rather difficult. In this paper, as usual, we break these barriers by employing a Cole-Hopf type transformation (cf. \cite{Othmer97, Wang08})
\begin{equation}\label{transformation}
v\triangleq-\frac{(\ln c)_x}{\mu}=-\frac{1}{\mu} \frac{c_x}{c},
\end{equation}
which turns the system \eqref{omn} into a parabolic-hyperbolic system without singularity
\begin{equation}\label{ph}
\begin{cases}
u_t-\chi(uv)_x=Du_{xx},\\
v_t-u_x=0,
\end{cases}
\end{equation}
with $\chi=\mu \xi>0$.
The initial-boundary conditions \eqref{omn-init}-\eqref{omn-bound} become
\begin{equation}\label{initial date}
(u,v)(x,0)=(u_0,v_0)(x) \ \to (0,v_+)  \ \ \text{as} \ x \to \infty,
\end{equation}
\begin{equation}\label{new-boundary data}
(Du_{x}+\chi uv)(0,t)=\eta(t), \ t\in\R_+
\end{equation}
where $v_+=-\displaystyle \frac{1}{\mu} \lim\limits_{x \to +\infty} \frac{c_{0x}}{c_0}=0$.
Therefore our plan is to work on the transformed problem \eqref{ph}-\eqref{new-boundary data} first and then transfer the results back to the original problem
\eqref{omn} with \eqref{omn-init}-\eqref{omn-bound} by solving $c$ in terms of $v$ from \eqref{transformation}. The detailed results and some new ideas developed in this paper will be elaborated in section 2 when appropriate.

Next we recall some existing results related to the transformed system \eqref{ph} and hence the original chemotaxis system \eqref{omn}. First, the one-dimensional problem has been studied extensively from various aspects such as the existence/stability of traveling wave solutions \cite{jin13, Li09, Li10, Lij13,Davis-Marangell, Mei-peng-wang}, global dynamics of solutions in the whole space \cite{GXZZ, Li-pan-zhao, MWZ-Indiana-2018, zhang-tan-sun} or in the bounded interval \cite{Hou2, Li-Zhao-JDE, TWW, Li112, zhang07}. Recently the half-space problem of \eqref{ph} on $[0,\infty)$ with non-homogeneous Robin boundary conditions on $u$ was considered in \cite{Deng} where the point-wise estimates of solutions converging to constant steady states was derived. {\color{black}The multidimensional form corresponding to \eqref{ph} reads (cf. \cite{Wang-xiang-yu})
\begin{equation}\label{phn}
\begin{cases}
u_t-\nabla \cdot (\chi u \vec{v})=D\Delta u,\\
\vec{v}_t-\nabla u=0,
\end{cases}
\end{equation}
where $\vec{v}:=-\frac{1}{\mu} \frac{\nabla c}{c}$ is a vector.}
In the whole space $\R^d$ ($d\geq 2$), when the initial datum is close to the constant ground state $(\bar{u}, {\bf 0})$, numerous results have been  obtained to the system (\ref{ph}).  First the local well-posedness and blowup criteria of large-amplitude classical solutions have been established in \cite{Fan-zhao, Li111} where in particular the global well-posedness and large time behavior of classical solutions have been obtained in \cite{Li111} if $(u_0-\bar{u} , {\bf v_0})\in H^s(\R^d)$ for $s>\frac d2+1$ and $\norm{(u_0-\bar{u} , {\bf v_0})}_{H^s\times H^s}$ is small. Later, Hao \cite{Hao} established the global existence of mild solutions in the critical Besov space $\dot{B}_{2,1}^{-\frac12}\times (\dot{B}_{2,1}^{-\frac12})^d $ with minimal regularity in the Chemin-Lerner space framework. The global well-posedness of strong solutions of \eqref{ph} in $\R^3$ was established in \cite{DL} if $\norm{(u_0-\bar{u} , {\bf v_0})}_{L^2\times H^1}$ is small. If the initial datum has a higher regularity such that $\norm{(u_0-\bar{u} , {\bf v_0})}_{H^2\times H^1}$ is small, the algebraic decay of solutions was further derived in \cite{DL}. Recently, Wang, Xiang and Yu \cite{Wang-xiang-yu} established the global existence and time decay rates of solutions of \eqref{ph} in $\R^d$ for $d=2,3$ if $(u_0-\bar{u} , {\bf v_0})\in H^2(\R^d)$ and $\norm{(u_0-\bar{u}, {\bf v_0})}_{H^1\times H^1}$ is small.    In the multidimensional bounded domain $\Omega \subset \R^d(d=2,3)$, the global existence and decay properties of solutions under Neumann boundary conditions were obtained in \cite{Li112} for small data.
When the chemical diffusion is considered, namely the second equation of \eqref{omn} is replaced by $c_t= {\color{black}c_{xx}}-\mu uc$, we refer to \cite{Li-Wang-MBS, Li11, Li14, PWZ, TWW, Wang-xiang-yu, Win1, Win2} and references therein for various interesting results.

Finally we state the organization of the rest of this paper. In section 2, we shall derive some preliminary results and then state our main results on both the transformed system \eqref{ph} and the original system \eqref{omn}. In section 3, we prove our main results.

\section{Preliminaries and main results}

Before proceeding, we clarify some notations used throughout this paper for convenience.\\

{\bf Notations}. In the sequel, we use $C>0$ to denote a generic constant which may change from line to line. $H^m(\mathbb{R}_+)(m\geq0)$ is the usual Sobolev space whose norm is abbreviated as
$\|f\|_m:=\sum\limits_{k=0}^{m}\|\partial_x^kf\|$ with $\|f\|:=\|f\|_{L^2(\mathbb{R}_+)}$, and $H^m_w(\mathbb{R}_+)$ denotes the weighted Sobolev space of measurable function $f$ such that $\sqrt{w}\partial_x^jf\in L^2(\mathbb{R}_+)$ for $0\leq j\leq m$ with norm
$\|f\|_{m,w}:=\sum\limits_{k=0}^{m}\|\sqrt{w}\partial_x^kf\|$ and $\|f\|_w:=\|\sqrt{w}f\|_{L^2(\mathbb{R}_+)}$.\\

\subsection{Wave selection}
As mentioned before, we shall work on the transformed problem \eqref{ph}-\eqref{new-boundary data} first. For this, we need to study the traveling wave solutions of \eqref{ph} over $(-\infty, \infty)$ and identify which wave will be selected by the given initial boundary value problem \eqref{ph}-\eqref{new-boundary data}. To this end,  we substitute the wave ansatz
$$(U,V)(z)=(u,v)(x,t), z=x-st \in (-\infty, \infty)$$
into \eqref{ph} and obtain that
\begin{equation}\label{traveling wave equation}
\begin{cases}
-sU'-\chi(UV)'=DU'',\\
-sV'-U'=0
\end{cases}
\end{equation}
From \eqref{transformation} and \eqref{property}, one can easily see that
\[V(z)=\frac{-\C'(z)}{\mu\C(z)}<0,\ V(+\infty)=-\lim_{z\rightarrow+\infty}\frac{\C'(z)}{\mu\C(z)}=0.\]
Hence, by \eqref{property}, the boundary conditions of \eqref{traveling wave equation} read
\begin{equation}\label{boundary condition}
U(+\infty)=V(+\infty)=0,\ U(-\infty)=u_-,~V(-\infty)=v_-<0,~U'(\pm\infty)=V'(\pm\infty)=0.
\end{equation}
Then integrating (\ref{traveling wave equation}) over $[z,+\infty)$ yields
\begin{equation}\label{3-4}
\begin{cases}
-sU-\chi UV=DU',\\
-sV=U.
\end{cases}
\end{equation}
The second equation of \eqref{3-4} implies $-sv_-=u_-$, which in combination with \eqref{expression} leads to
\begin{equation}\label{1-6}
s=\sqrt{\chi u_-}, \ \  v_-=-\sqrt{u_-/\chi}<0.
\end{equation}
Now by \eqref{property} and \eqref{1-6}, we obtain the existence of traveling wave solutions to the transformed system $\eqref{ph}$.
\begin{lemma}\label{etw}
Assume that $u_-$ and $v_-$ satisfy $\eqref{1-6}$. Then the system \eqref{traveling wave equation}-\eqref{boundary condition} has a monotone traveling wave solution $(U,V)(z)=(U,V)(x-st)$ which is unique up to a translation and has an explicit form:
\begin{equation}\label{explicit}
U(z)=\frac{u_-}{e^{\frac{s}{D}z}+1},\ V(z)=\frac{v_-}{e^{\frac{s}{D}z}+1}<0
\end{equation}
where the wave speed $s$ is given by $\eqref{1-6}$ and $U'<0,V'>0$.
\end{lemma}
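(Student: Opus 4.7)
The proof plan is to reduce the first-order system \eqref{3-4} to a single separable ODE for $U$ and solve it in closed form. From the second equation of \eqref{3-4} one immediately has $V=-U/s$; substituting into the first equation and using $s^2=\chi u_-$ from \eqref{1-6}, I would obtain the scalar logistic-type equation
\begin{equation*}
DU'=-sU+\frac{\chi}{s}U^2=\frac{\chi}{s}U(U-u_-),
\end{equation*}
whose constant solutions $U\equiv 0$ and $U\equiv u_-$ match the two asymptotic states required by \eqref{boundary condition}.

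Next, I would integrate by separation of variables via the partial fraction decomposition $\frac{1}{U(U-u_-)}=\frac{1}{u_-}\bigl(\frac{1}{U-u_-}-\frac{1}{U}\bigr)$, using the identity $\chi u_-/(sD)=s/D$, to arrive at
\begin{equation*}
\ln\left|\frac{U-u_-}{U}\right|=\frac{s}{D}z+c_0.
\end{equation*}
For a monotone heteroclinic connecting the two equilibria one needs $0<U<u_-$, so the bracket inside the absolute value is negative; exponentiating and solving for $U$ then yields $U(z)=u_-/(1+Ke^{sz/D})$ for some $K>0$. Because the traveling wave ansatz is invariant under translations, replacing $z$ by $z+(D/s)\ln K$ normalizes $K=1$ and produces exactly the formula in \eqref{explicit}. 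The companion profile $V=-U/s=v_-/(1+e^{sz/D})$ follows at once using $v_-=-u_-/s=-\sqrt{u_-/\chi}$ from \eqref{1-6}.

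It then remains to verify the boundary data, the monotonicity, and uniqueness. As $z\to-\infty$ one has $e^{sz/D}\to 0$ so $U\to u_-$ and $V\to v_-$, while as $z\to+\infty$ the exponential blows up and $U,V\to 0$; direct differentiation gives $U'(z)=-(su_-/D)e^{sz/D}/(e^{sz/D}+1)^2<0$ and $V'=-U'/s>0$, with $U',V'\to 0$ at both infinities as required by \eqref{boundary condition}. Uniqueness up to translation is immediate because the only free constant introduced in the integration is $K$, and different admissible values of $K$ differ precisely by the shift $z\mapsto z+(D/s)\ln K$. There is essentially no serious obstacle here: the argument is a textbook reduction to a logistic ODE, and the only mild care point is the sign choice on removing the absolute value, which is forced by the monotonicity dictated by the asymptotic states.
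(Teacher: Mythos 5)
Your proof is correct, but it takes a different route from the paper. The paper does not re-solve the ODE at all: it obtains Lemma \ref{etw} by transporting the already-known explicit wave $(U,\mathcal{C})$ of the original chemotaxis system (formula \eqref{expression}, quoted from \cite{wang12}) through the Cole--Hopf transformation \eqref{transformation}, computing $V(z)=-\mathcal{C}'(z)/(\mu\mathcal{C}(z))$ and reading off the profile, the sign $V<0$, and the relations \eqref{1-6} directly; the lemma is then stated without a separate proof. You instead work entirely within the transformed system: you use the once-integrated system \eqref{3-4} to eliminate $V=-U/s$, reduce to the scalar logistic equation $DU'=\frac{\chi}{s}U(U-u_-)$, and integrate by partial fractions, with the sign choice in removing the absolute value forced by the requirement $0<U<u_-$ for a heteroclinic connection. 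Both arguments are sound. Your version is self-contained and makes the uniqueness-up-to-translation claim transparent (the only integration constant $K$ is absorbed by the shift $z\mapsto z+(D/s)\ln K$), whereas the paper's version is shorter but leans on the cited explicit solution of the singular system and on the consistency of the transformation. One small point worth making explicit in your write-up: the passage from the second-order system \eqref{traveling wave equation} with \eqref{boundary condition} to the first-order system \eqref{3-4} (integration over $[z,+\infty)$ using the decay of $U$, $V$, $U'$ at $+\infty$) is carried out in the paper just before the lemma, and your argument implicitly relies on it; citing that step closes the loop on uniqueness for the original second-order problem.
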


In the following, we will first study the convergence of solutions of \eqref{ph}-\eqref{new-boundary data} to a shifted traveling wave solution $(U,V)(z)$ restricted on $[0, \infty)$, and then transfer the results back to the original chemotaxis system \eqref{omn} with \eqref{omn-init}-\eqref{omn-bound}. We note that the system \eqref{ph}-\eqref{new-boundary data} is confined on the half space $[0,+\infty)$ with a non-zero flux boundary condition given at $x=0$, while the traveling wave solution $(U,V)(z)$ is defined on the whole space $(-\infty,+\infty)$ with boundary condition $(U,V)(-\infty)=(u_-,v_-)$. 
It is easy to see that the traveling wave $(U,V)(x-st)$ at $x=0$ satisfies
\[(DU_x+\chi UV)(x-st)|_{x=0}=(DU'+\chi UV)(-st)\rightarrow\chi u_-v_- \ \text{ as }t\rightarrow+\infty.\]
Since we expect that the solution $(u,v)$ of \eqref{ph}-\eqref{new-boundary data} converges to the traveling wave $(U,V)$ asymptotically in time, owing to \eqref{eta} and \eqref{new-boundary data}, it is necessary that
\begin{equation}\label{2.13}
\eta_-=\chi u_-v_-<0
\end{equation}
where the fact $v_-<0$ has been used. The condition $\eta_-<0$ means that there exists a continuous supplement of bacteria (or cell) at the boundary $x=0$ to keep the flux of the bacteria being inward. A simple calculation  from \eqref{2.13} and equations in \eqref{1-6} yields
\begin{equation}\label{wave}
s=\left(\chi|\eta_-|\right)^{\frac{1}{3}},\ v_-=\left(\frac{\eta_-}{\chi^2}\right)^{\frac{1}{3}}<0, \ \ u_-=\left(\frac{\eta_-^2}{\chi}\right)^{\frac{1}{3}}.
\end{equation}
This implies that the wave profile \eqref{explicit} with \eqref{wave} restricted on $[0, \infty) $  is anticipated to be selected as the asymptotic profile of the initial-boundary value problem \eqref{ph}-\eqref{new-boundary data}. The rest of this paper will be devoted to proving this conjecture with some appropriate initial data. For convenience, in the sequel, we still use the notation $u_-$ and $v_-$, but keeping in mind they are uniquely determined by $\eta_-$ through \eqref{wave}.

\subsection{Set-up of initial data and statement of main results}
It is known in \cite{jin13} that if the initial function is a small perturbation of the traveling wave $(U,V)(x-st)$ in some suitable topological space, the solution of the Cauchy problem of \eqref{ph} will converge to a shifted traveling wave  $(U,V)(x-st-x_0)$ where the shift $x_0$ is determined by the initial data. However, for the system \eqref{ph} on the half space $[0,\infty)$,  a boundary layer may exist at the boundary $x=0$ because the boundary value of the traveling wave profile $(U,V)(x-st)$ at $x=0$ varies in time. Namely
$$(u-U)|_{x=0}=u(0,t)-U(-st)\ne 0$$
may occur since the value $u(0,t)$ is unknown due the non-zero flux boundary condition \eqref{new-boundary data}, see an illustration in Fig.\ref{fig}(a).
To control this boundary layer effect, we shall use the idea of Matsumura-Mei \cite{MM99} by shifting the traveling wave far away from the boundary initially with a shift $\b>0$ being a large constant, so that the boundary value of the shifted wave profile at $x=0$ is sufficiently close to $u_-$. By setting the initial datum  as a small perturbation of the shifted traveling wave profile $(U,V)(x-st-\b)$, the initial boundary value $u_0(0)$ will be close to $u_-$ as long as $\beta$ is sufficiently large (see Fig.\ref{fig}(b)). Then we anticipate that the resulting boundary value of the solution will asymptotically converge to $u_-$ as time tends to infinity due to the dissipation structure of the model. Accordingly we may expect that the time-asymptotic profile of the solution to \eqref{ph}-\eqref{new-boundary data} is $(U,V)(x-st+\a-\b)$ restricted on $[0,\infty)$ with another shift $\a$ to be determined.

\begin{figure}[!htbp]
\centering
\includegraphics[width=7.5cm]{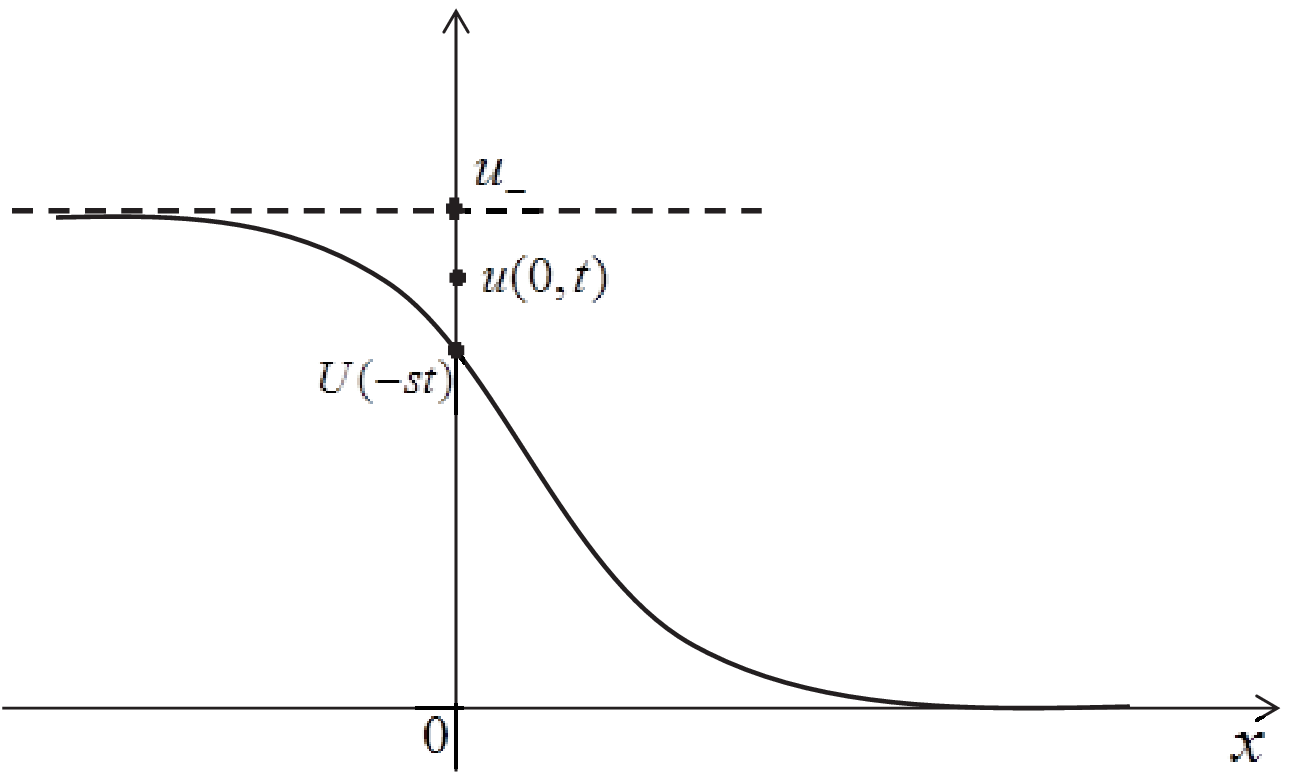}\hspace{0.5cm}
\includegraphics[width=7.5cm]{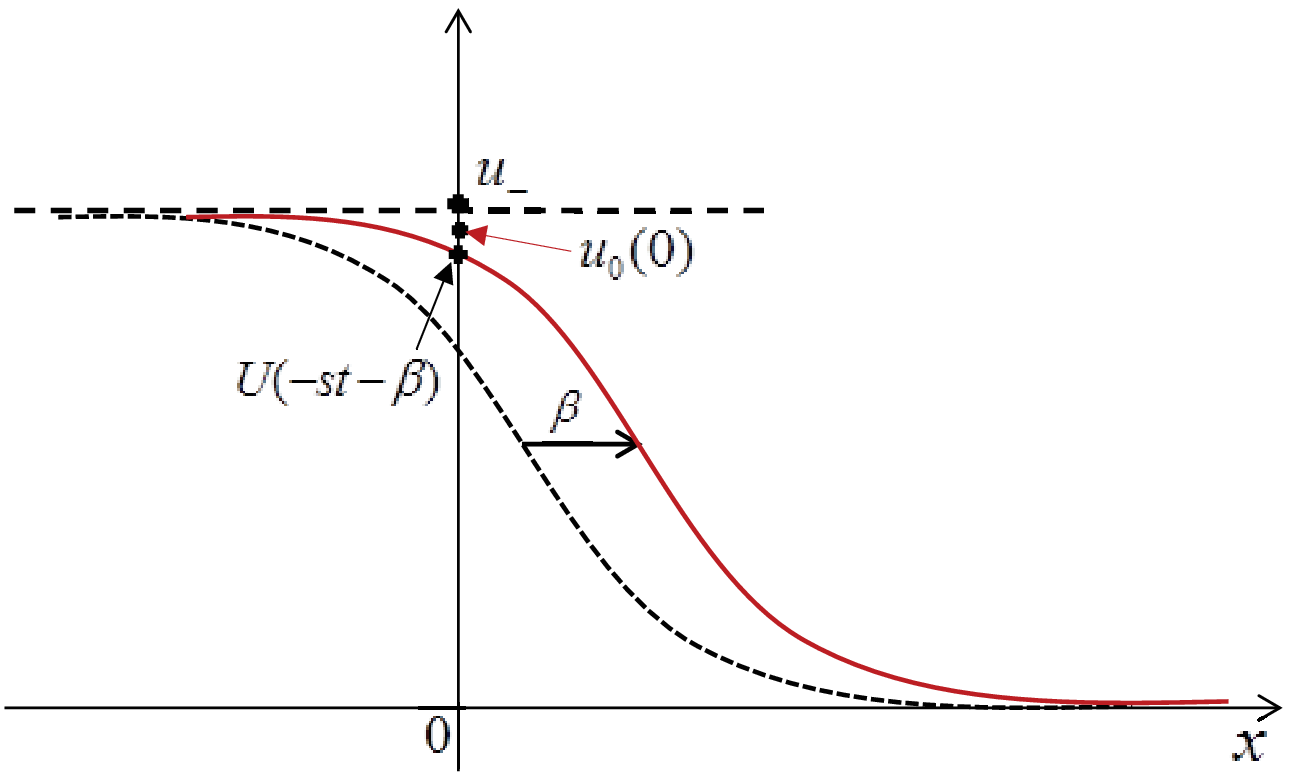}
(a) \hspace{7cm} (b)
\caption{Illustration of the boundary layer effect.}
\label{fig}
\end{figure}

The same transformed system \eqref{ph} with homogeneous Dirichlet boundary condition on $u$ was studied in a previous work \cite{Mei-peng-wang} and the convergence to traveling wave solutions restricted on $[0,\infty)$ was obtained only for the non-physical case $U(\infty)=u_+>0, V(-\infty)=v_-=0$ in the sense that the {\color{black} solution of \eqref{ph} was not able to give meaningful results to $c$ when it was passed} to the original chemotaxis system \eqref{omn} via (\ref{transformation}), see \cite[Remark 2.1]{Mei-peng-wang}. In this paper, we consider different boundary conditions on $(U,V)$ given by \eqref{boundary condition} directly derived from traveling wave profile $(U, \mathcal{C})$. Moreover the dynamic non-zero flux boundary condition \eqref{new-boundary data} gives no information on the boundary value of $u$ explicitly,   which is significantly different from \cite{Mei-peng-wang} and \cite{MM99} wherein the fixed boundary value was directly imposed. Hence extra boundary estimates are needed in this paper to prove the stability result.  More importantly, the result of \eqref{ph} with the non-zero flux boundary condition \eqref{new-boundary data} can now be successfully transferred to \eqref{omn}. Apart from these differences, we determine the shift $\a$ based on the first equation of \eqref{ph} instead of the second one of \eqref{ph} as used in \cite{Mei-peng-wang}, which enables us to derive our desired results with the non-zero flux boundary condition \eqref{new-boundary data}.

Below we shall briefly show how the shift $\alpha$ is determined and then state our main results of this paper. Indeed from the first equation of \eqref{ph}, we have
\[(u-U)_t=D(u-U)_{xx}+\k(uv-UV)_x.
\]
Integrating this equation over $\R_+$ with respect to $x$, by the boundary conditions \eqref{new-boundary data} and \eqref{2.13}, we get
\be\label{2.15}\begin{split}
&\frac{d}{dt}\ii[u(x,t)-U(x-st+\a-\b)]dx=[D(u-U)_{x}+\k(uv-UV)]|_0^\infty\\
&=-\eta(t)+(DU_{x}+\k UV)(-st+\a-\b)\\
&=-(\eta(t)-\eta_-)-\k u_-v_--sU(-st+\a-\b)\\
&=s(u_--U(-st+\a-\b))-(\eta(t)-\eta_-),
\end{split}\ee
where we have used the first equation of \eqref{3-4} and $s=-\k v_-$ owing to \eqref{1-6}. Integrating \eqref{2.15} in $t$ yields
\be\label{3.1}
\begin{split}
&\ii[u(x,t)-U(x-st+\a-\b)]dx\\&=\ii[u_0(x)-U(x+\a-\b)]dx+s\int_0^t(u_--U(-s\tau+\a-\b))d\tau-\int_0^t(\eta(\tau)-\eta_-)d\tau.
\end{split}\ee
To determine $\alpha$, we look for the solution satisfying $\ii[u(x,t)-U(x-st+\a-\b)]dx\rightarrow0$ as $t\rightarrow\infty$. Thus, if we set
\[I(\a):=\ii[u_0(x)-U(x+\a-\b)]dx\\+s\int_0^\infty(u_--U(-st+\a-\b))dt-\int_0^\infty(\eta(t)-\eta_-)dt,\]
then $I(\a)=0$. Differentiating $I(\a)$ in $\a$ gives
\[\begin{split} \frac{d I}{d\a}&=-\ii U'(x+\a-\b)dx -s\int_0^\infty U'(-st+\a-\b)dt\\&=U(\a-\b)+u_--U(\a-\b)\\&=u_-.\end{split}\]
Hence, it follows that
 \[\begin{split}0=I(\a)=I(0)+u_-\a=&\ii[u_0(x)-U(x-\b)]dx+s\int_0^\infty(u_--U(-st-\b))dt
\\&-\int_0^\infty(\eta(t)-\eta_-)dt+u_-\a,\end{split}\]
which enables us to determine $\a$ as
\[
\a=-\frac{1}{u_-}\left(\ii[u_0(x)-U(x-\b)]dx+\int_0^\infty\big[s(u_--U(-st-\b))-(\eta(t)-\eta_-)\big]dt\right).
\]
In view of \eqref{explicit}, one can easily calculate that
\[\int_0^\infty\big(u_--U(-st-\b))dt=u_-\int_0^\infty\frac{ e^{-\frac{s}{D}(st+\b)}}{ e^{-\frac{s}{D}(st+\b)}+1}dt
=\frac{Du_-}{s^2}\ln(1+ e^{-\frac{s\b}{D}}).\]
Thus, the formula of deriving the shift $\alpha$ reads
\be\label{alpha}
\begin{split}
\a=-\frac{1}{u_-}\ii[u_0(x)-U(x-\b)]dx-\frac{D}{s}\ln(1+ e^{-\frac{s\b}{D}})+\frac{1}{u_-}\int_0^\infty(\eta(t)-\eta_-)dt.
\end{split}\ee
Under \eqref{alpha},  by \eqref{3.1}, one verifies
\be\label{2.11}\begin{split}&\ii[u(x,t)-U(x-st+\a-\b)]dx\\&=I(\a)-s\int_t^\infty(u_--U(-s\tau+\a-\b))d\tau+\int_t^\infty(\eta(\tau)-\eta_-)d\tau\\
&=-s\int_t^\infty(u_--U(-s\tau+\a-\b))d\tau+\int_t^\infty(\eta(\tau)-\eta_-)d\tau\\
&=-\frac{Du_-}{s}\ln(1+ e^{\frac{s}{D}(-st+\a-\b)})+\int_t^\infty(\eta(\tau)-\eta_-)d\tau\\&\rightarrow0 \text{ as }t\rightarrow\infty,\end{split}\ee
as desired.
In view of \eqref{alpha}, the shift $\a$ is determined by the initial perturbation around the traveling wave $U(x-\b)$. With the anti-derivative technique {\color{black} usually} used for the conservation laws (cf. \cite{smoller}), one is motivated to define the initial perturbation by
\begin{equation}\label{ict}
(\Phi_0, \Psi_0)(x)\triangleq-\int_x^{\infty}(u_0(y)-U(y-\b),v_0(y)-V(y-\b))dy.
\end{equation}
We also assume that $\eta(t)$ is a small perturbation of $\eta_-$ in the sense of
\begin{equation}\label{around}
\int_0^t|\eta_\tau(\tau)|d\tau+|\eta(t)-\eta_-|+\int_0^\infty|\eta(t)-\eta_-|dt+\int_0^t\int_\tau^\infty|\eta(z)-\eta_-|d\tau\leq \delta,\ \forall t\geq0
\end{equation}
for some small constant $\delta>0$.
Typical candidates of $\eta(t)$ include functions satisfying $|\eta_t(t)|+|\eta(t)-\eta_-|\leq \delta(1+t)^{-k} \text{ for } k>2$. \\

We are now ready to state our main results as follows.

\begin{theorem}\label{thm-1} Assume that $v_+=0$, $\eta_-<0$, and that $\eta(t)$ satisfies \eqref{around}. Let $(U,V)$ be a traveling wave  of $\eqref{ph}$ satisfying \eqref{boundary condition} with $u_-$, $v_-$ given by \eqref{wave}. There exists a constant $\varepsilon_0>0$ such that if $\norm{\Phi_0}_{2,w_0}+\norm{\Psi_0}_{2}+\norm{\Psi_{0x}}_{1,w_0}+\delta+\beta^{-1}\leq \varepsilon_0$  and $(\|\Phi_0\|_1+\delta)\b\leq1$,
then the initial-boundary value problem $\eqref{ph}$-$\eqref{new-boundary data}$ with $x \in [0,\infty)$ has a unique global solution $(u,v)(x,t)$ satisfying
\begin{equation}\label{regularity}
\begin{cases}
u(x,t)-U(x-st+\a-\b)\in C([0,\infty); H^1_w)\cap L^2((0,\infty);H^2_w),\\
v(x,t)-V(x-st+\a-\b)\in C([0,\infty); H^1_w)\cap L^2((0,\infty);H^1_w),
\end{cases}\end{equation}
where $\a=\a(\b)$ is a shift determined by \eqref{alpha}, and the weight function $w$ is defined by
\begin{equation}\label{weight fucntion}
w(x,t):=1+e^{\frac{s}{D}(x-st+\a-\b)} \text{ with } w_0(x):=w(x,0).
\end{equation}
Furthermore, the solution has the following asymptotic profile
\begin{equation}\label{asym}
\sup\limits_{x\in\R_+}\abs{(u,v)(x,t)-(U,V)(x-st+\a-\b)}\to 0~~\mathrm{as}~~ t\to+\infty.
\end{equation}
\end{theorem}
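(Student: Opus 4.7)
The plan is to combine the anti-derivative technique with a weighted energy method in the moving frame. Writing $z := x - st + \alpha - \beta$, $\tilde U(x,t):=U(z)$ and $\tilde V(x,t):=V(z)$, I introduce the perturbations
\[
\phi(x,t) := -\int_x^{\infty}(u(y,t)-\tilde U(y,t))\,dy,\qquad \psi(x,t) := -\int_x^{\infty}(v(y,t)-\tilde V(y,t))\,dy,
\]
so that $\phi_x = u - \tilde U$ and $\psi_x = v - \tilde V$. Subtracting the traveling wave equations \eqref{traveling wave equation} from \eqref{ph} and integrating in $x$ from the generic point to $+\infty$, $(\phi,\psi)$ is seen to solve
\begin{equation*}
\phi_t - D\phi_{xx} = \chi \tilde U \psi_x + \chi \tilde V \phi_x + \chi \phi_x \psi_x,\qquad \psi_t - \phi_x = 0,
\end{equation*}
with $(\phi,\psi)(+\infty,t)=0$, and a flux-type boundary condition at $x=0$ obtained by evaluating the $\phi$-equation there: using $DU'+\chi UV=-sU$ from \eqref{3-4}, one gets $\phi_t(0,t) = \eta(t) + sU(-st+\alpha-\beta)$. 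Integrating in $t$ and invoking the choice of $\alpha$ in \eqref{alpha}, the computation \eqref{2.11} rewrites as $\phi(0,t) = \frac{Du_-}{s}\ln(1+e^{s(-st+\alpha-\beta)/D}) - \int_t^\infty(\eta(\tau)-\eta_-)\,d\tau$, so that $|\phi(0,t)|$ decays in time and is controlled by $\beta^{-1}$ and $\delta$ initially.

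Next I would prove local existence of $(\phi,\psi)$ in the class suggested by \eqref{regularity} by a standard iteration argument, and extend it to a global solution via a continuation argument driven by uniform a priori estimates. Define
\[
N(T):=\sup_{0\le t\le T}\bigl(\|\phi(\cdot,t)\|_{2,w}+\|\psi(\cdot,t)\|_{2}+\|\psi_x(\cdot,t)\|_{1,w}\bigr),
\]
and work under the ansatz $N(T)\le \varepsilon\ll 1$. The key structural fact about the weight is $w_t+sw_x\equiv 0$, so $w$ is itself a traveling wave and produces no parasitic transport term when testing against $w\phi$. The lowest-order estimate is the core of the analysis: multiplying the $\phi$-equation by $w\phi$ and combining with the $\psi$-equation tested by $-\chi\tilde U\psi$ gives, after integration by parts, a dissipation that controls $\|\phi\|_w^2+\|\psi\|^2$ together with $\int_0^t(\|\phi_x\|_w^2+\|\sqrt{\tilde U}\,\psi\|^2)\,d\tau$. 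The weight is essential since $\tilde V<0$ and $\tilde U>0$ produce a sign-indefinite coupling; the explicit relations among $\tilde U'$, $\tilde V'$, $w$ and $w_x$ turn the leading bilinear form into a positive-definite quadratic in $(\phi,\psi)$. Higher-order estimates are obtained by differentiating in $x$ and repeating the weighted energy method; boundary terms involving $\phi_{xx}(0,t)$ are reduced to $\eta(t)$, $\eta_t(t)$ and $U(-st+\alpha-\beta)-u_-$ through the equation, and hence estimated by \eqref{around} and by the exponentially small bound $|U(-st+\alpha-\beta)-u_-|\lesssim e^{-s(st+\beta-\alpha)/D}$. The hypothesis $(\|\Phi_0\|_1+\delta)\beta\le 1$ is tailored to make the initial boundary mismatch $|\phi(0,0)|$ small enough to be absorbed. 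Closing the loop yields $N(T)^2\le C(\|\Phi_0\|_{2,w_0}^2+\|\Psi_0\|_2^2+\|\Psi_{0x}\|_{1,w_0}^2+\delta^2+\beta^{-2})$, which via continuation delivers global existence and the regularity \eqref{regularity}.

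The asymptotic statement \eqref{asym} then follows in a standard way: the a priori bound provides $\|\phi_x\|_{1,w}+\|\psi_x\|_1\in L^\infty(0,\infty)$ and $\|\phi_{xx}\|_w^2+\|\psi_{xx}\|^2\in L^1(0,\infty)$; checking that $\frac{d}{dt}(\|\phi_x\|^2+\|\psi_x\|^2)\in L^1(0,\infty)$ forces $\|\phi_x\|+\|\psi_x\|\to 0$, and the one-dimensional Sobolev interpolation $\|f\|_\infty^2\le 2\|f\|\,\|f_x\|$ upgrades this to $L^\infty$-convergence, which is \eqref{asym}. I expect the principal difficulty to lie in the boundary analysis: unlike in \cite{MM99,Mei-peng-wang}, the value $u(0,t)$ is not prescribed but determined implicitly by the dynamic flux \eqref{new-boundary data}, so no boundary term is automatically annihilated by the test function. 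Their control at each differentiation level rests on a delicate interplay between the two smallness parameters $\delta$ and $\beta^{-1}$, and orchestrating these contributions while keeping the weighted dissipation coercive is where the real work of the argument lies.
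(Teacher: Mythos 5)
Your overall architecture coincides with the paper's: the anti-derivative reformulation into \eqref{nonlinear system}, the identification of the dynamic boundary value $\phi(0,t)$ from the flux condition together with the choice of $\alpha$ in \eqref{alpha}, weighted energy estimates with a weight equivalent to $1/U$, local existence plus continuation, and the interpolation $\|f\|_{L^\infty}^2\le 2\|f\|\,\|f_x\|$ to upgrade to \eqref{asym}. But the core of the argument --- the basic weighted $L^2$ identity --- is mis-specified in a way that would not close. Testing the $\phi$-equation with $w\phi$ and the $\psi$-equation with $-\chi U\psi$ produces the cross terms $\chi Uw\,\phi\psi_x-\chi U\,\psi\phi_x$, whose coefficients have opposite signs and hence cannot be assembled into an exact $x$-derivative plus controllable remainders; moreover the term $-\chi U\psi\psi_t$ contributes $-\frac{d}{dt}\int\frac{\chi U}{2}\psi^2$, which has the wrong sign for an energy functional. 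The paper instead tests with $\phi/U$ and $+\chi\psi$, so that $\chi U\psi_x\cdot\frac{\phi}{U}+\chi\psi\cdot\phi_x=\chi(\phi\psi)_x$ integrates to a pure boundary term, and the remaining quadratic form in $\phi$ is coercive because $\left(\frac{s+\chi V}{U}\right)_x-\frac{DU_x^2}{2U^3}=-\frac{\chi}{s}\frac{U_x}{U}>0$ by \eqref{3-4}. Relatedly, your claim that the basic estimate dissipates $\int_0^t\|\sqrt{U}\,\psi\|^2$ cannot be delivered: $\psi_t=\phi_x$ carries no damping of $\psi$, and indeed \eqref{priori} contains no time-integrated control of $\psi$ itself. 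The dissipation of $\psi_x$ is recovered only at the next order, from the elliptic identity for the first equation (giving $\int_0^t\int U\psi_x^2$) combined with a splitting of $\R_+$ at $x=st-\alpha+\beta$, on one side of which $U\gtrsim w$ and on the other $-w_t\ge\frac{s^2}{2D}w$; the transport identity $w_t+sw_x=0$ that you single out is true but is not the mechanism.

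A second gap concerns the hypotheses themselves. The theorem is stated for $(\Phi_0,\Psi_0)$ in \eqref{ict}, centered at the wave shifted by $-\beta$, whereas the reformulated initial data $(\phi_0,\psi_0)$ are centered at the wave shifted by $\alpha-\beta$; you never pass from one to the other. This is precisely where the hypothesis $(\|\Phi_0\|_1+\delta)\beta\le 1$ enters (Lemmas \ref{lem1} and \ref{lem2}): the difference $\phi_0(x)-\Phi_0(x)=-\int_0^\alpha U(x+\theta-\beta)\,d\theta$ has weighted norm squared of order $\alpha^2\beta+\alpha^2$, because the weight is essentially flat on an interval of length $\beta$, and one needs $|\alpha|\lesssim\|\Phi_0\|_1+e^{-s\beta/D}+\delta$ together with $(\|\Phi_0\|_1+\delta)\beta\le1$ to conclude that $N(0)$ is small. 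Your reading of that hypothesis as controlling the initial boundary mismatch $|\phi(0,0)|$ misidentifies its role, and without this step the smallness assumptions of the theorem do not feed into the a priori estimate at all.
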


Base on the Cole-Hopf transformation \eqref{transformation}, we are able to transfer the stability results back to the original chemotaxis model \eqref{omn} with \eqref{omn-init}-\eqref{omn-bound}.
\begin{theorem}\label{mainth2}
Assume that $\eta(t)$ satisfies \eqref{around} and that $\eta_-<0$, $c_+>0$. Let  $(U,\mathcal{C})$ be a traveling wave profile of \eqref{omn} satisfying \eqref{property} with $u_-$ given by \eqref{wave}. Then there exists a constant $\varepsilon_0>0$
such that if   $\norm{\Phi_0}_{2,w_0}+\norm{\Psi_0}_{2}+\norm{\Psi_{0x}}_{1,w_0}+\delta+\beta^{-1}\leq \varepsilon_0$ and $(\|\Phi_0\|_1+\delta)\b\leq1$, where
\[\Phi_0(x)=-\int_x^{\infty}(u_0(y)-U(y-\b))dy,\  \Psi_0(x)=-\frac{1}{\mu}(\ln c_0(x)-\ln \mathcal{C}(x-\b)),
\]
then the  initial-boundary value problem \eqref{omn} with $x\in [0,\infty)$ and \eqref{omn-init}-\eqref{omn-bound} has a unique global solution $(u,c)(x,t)$ satisfying
\begin{equation*}
\begin{split}
&(u,{c_x}/{c})(x,t)-(U,{\mathcal{C}_x}/{\mathcal{C}})(x-st+\a-\b) \in C([0,\infty);H_w^1) \cap L^2((0,\infty);H_w^1),\\
&c(x,t)-\mathcal{C}(x-st+\a-\b) \in C([0,\infty);H^2),
\end{split}
\end{equation*}
and possessing the following asymptotic profile:
\begin{equation*}
\sup \limits_{x \in \R_+} |(u, c )(x,t)-(U, \mathcal{C})(x-st+\a-\b)| \to 0 \ \
\mathrm{as} \ \ t \to \infty.
\end{equation*}
\end{theorem}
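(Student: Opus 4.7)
My plan is to deduce Theorem 2.2 from Theorem 2.1 by inverting the Cole--Hopf transformation \eqref{transformation}, thereby reducing the singular chemotaxis problem to the already-analyzed hyperbolic-parabolic system \eqref{ph}-\eqref{new-boundary data}.

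First I convert the initial data. Given $c_0(x)>0$ with $c_0\to c_+$ at infinity, set $v_0(x):=-\frac{1}{\mu}(\ln c_0)_x$. Using $V(z)=-\mathcal{C}'(z)/(\mu\mathcal{C}(z))$, the $\Psi_0$ appearing in Theorem 2.2 rewrites as
\[
\Psi_0(x)=-\frac{1}{\mu}\bigl(\ln c_0(x)-\ln \mathcal{C}(x-\beta)\bigr)=-\int_x^{\infty}\bigl(v_0(y)-V(y-\beta)\bigr)\,dy,
\]
which is exactly the $\Psi_0$ used in Theorem 2.1, while $\Phi_0$ is identical in both statements. Hence the smallness conditions carry over, and Theorem 2.1 furnishes a unique global solution $(u,v)$ of \eqref{ph}-\eqref{new-boundary data} with the regularity \eqref{regularity} and uniform convergence $\sup_x|(u,v)-(U,V)(x-st+\alpha-\beta)|\to 0$.

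Next I recover $c$ by the explicit ODE solution
\[
c(x,t):=c_0(x)\exp\Bigl(-\mu\int_0^t u(x,\tau)\,d\tau\Bigr),
\]
which is positive and solves $c_t=-\mu uc$, $c(x,0)=c_0(x)$. Differentiating in $x$ and using $v_t=u_x$ from \eqref{ph} together with $(\ln c_0)_x=-\mu v_0$ yields $(\ln c)_x=-\mu v$ for all $t\ge 0$, so the Cole--Hopf relation is preserved in time. Substituting this into $u_t=[Du_x-\xi u(\ln c)_x]_x=[Du_x+\chi uv]_x$ (with $\chi=\mu\xi$) recovers the first equation of \eqref{omn} and converts \eqref{new-boundary data} into \eqref{omn-bound}, so $(u,c)$ solves the original initial-boundary value problem. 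The preserved relation also gives the integrated representations $c(x,t)=c_+\exp(\mu\int_x^\infty v(y,t)\,dy)$ and $\mathcal{C}(x-st+\alpha-\beta)=c_+\exp(\mu\int_x^\infty V(y-st+\alpha-\beta)\,dy)$, which combine into the key identity
\[
c(x,t)-\mathcal{C}(x-st+\alpha-\beta)=\mathcal{C}(x-st+\alpha-\beta)\bigl(e^{-\mu\Psi(x,t)}-1\bigr),
\]
where $\Psi(x,t):=-\int_x^\infty(v(y,t)-V(y-st+\alpha-\beta))\,dy$.

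From this identity, the uniform convergence $\sup_x|c-\mathcal{C}|\to 0$ follows from $\|\Psi(\cdot,t)\|_{L^\infty}\to 0$ together with the uniform bound $\mathcal{C}\le c_+$; the $H^2$-regularity of $c-\mathcal{C}$ is obtained by differentiating $c_x=-\mu vc$ and $c_{xx}=\mu^2 v^2 c-\mu v_x c$ (and the analogs for $\mathcal{C}$) and bounding the resulting expressions in $L^2$ by $\|v-V\|_{H^1}$, $\|\Psi\|_{L^2}$, and the uniform smoothness of $(U,V,\mathcal{C})$. The main obstacle is the $L^\infty_x$ decay of the anti-derivative $\Psi$: Sobolev embedding reduces this to $\|\Psi(\cdot,t)\|_{H^1}\to 0$, and while $\|\Psi_x\|_{L^2}=\|v-V\|_{L^2}\to 0$ is essentially built into Theorem 2.1, the decay $\|\Psi\|_{L^2}\to 0$ must be harvested from the weighted energy machinery underpinning the proof of Theorem 2.1, where $\Psi$ itself enters the Lyapunov functional and carries a time-integrable dissipation.
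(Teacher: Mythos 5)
Your overall strategy coincides with the paper's: convert the data via $\Psi_0=-\frac{1}{\mu}(\ln c_0-\ln\mathcal{C}(\cdot-\beta))$, invoke Theorem \ref{thm-1} for the transformed system, reconstruct $c$ from the ODE $c_t=-\mu uc$, and exploit the identity $c-\mathcal{C}=\mathcal{C}(e^{-\mu\psi}-1)$ with $\psi(x,t)=-\int_x^\infty(v-V)\,dy$. The data conversion, the verification that $(u,c)$ solves \eqref{omn} with \eqref{omn-init}--\eqref{omn-bound}, the key identity, and the $H^2$ regularity of $c-\mathcal{C}$ are all sound and essentially identical to the paper's proof (the paper expands $e^{-\mu\psi}-1$ in a Taylor series rather than differentiating $c_x=-\mu vc$, but these are interchangeable).

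The one genuine problem is your final step. You reduce $\sup_x|c-\mathcal{C}|\to0$ to $\|\psi\|_{H^1}\to0$ via the Sobolev embedding and then assert that the missing piece $\|\psi\|_{L^2}\to0$ can be ``harvested from the weighted energy machinery, where $\Psi$ itself \dots carries a time-integrable dissipation.'' It does not: inspect \eqref{priori}, or the basic estimate \eqref{L^2 estimate}. The quantity $\|\psi\|^2$ appears only as a uniformly bounded Lyapunov term on the left-hand side; every time-integrated dissipation term involves $\phi_x$, $\phi_{xx}$ or $\psi_x$, never $\psi$ itself. Consequently decay of $\|\psi(t)\|_{L^2}$ is not available from the a priori estimates, and the route you propose for the $L^\infty$ decay of $\psi$ stalls. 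The repair is the sharper interpolation inequality the paper uses throughout: for $f\in H^1(\R_+)$ with $f(\infty)=0$ one has $f^2(x)=2\int_x^\infty ff_y\,dy\le 2\|f\|\,\|f_x\|$, so that $\sup_{x\in\R_+}|\psi(x,t)|^2\le 2\|\psi(t)\|\,\|\psi_x(t)\|\to0$ using only the uniform bound on $\|\psi(t)\|$ from \eqref{priori} together with $\|\psi_x(t)\|\to0$. With that one substitution your argument closes and reproduces the paper's proof.
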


\section{Proofs of the main results}
\subsection{Reformulation of the problem}In this section, we will first prove Theorem \ref{thm-1} for the transformed problem $\eqref{ph}$-$\eqref{new-boundary data}$ with $(x,t)\in \R_+\times \R_+$ by the weighted energy method, and then prove Theorem \ref{mainth2} by passing the results to the original chemotaxis model \eqref{omn} with \eqref{omn-init}-\eqref{omn-bound} under the Cole-Hopf transformation \eqref{transformation}.
By \eqref{2.11}, one sees that the shift $\a$ is selected such that $\ii[u(x,t)-U(x-st+\a-\b)]dx\rightarrow0$ as $t\rightarrow+\infty$. Then we
employ the technique of anti-derivative to study the asymptotic behavior of solutions  to  $\eqref{ph}$-$\eqref{new-boundary data}$ and define
\begin{eqnarray*}
\begin{aligned}
(\phi(x,t), \psi(x,t))\triangleq-\int_x^{\infty}(u(y,t)-U(y-st+\a-\b),v(y,t)-V(y-st+\a-\b))dy.\\
\end{aligned}
\end{eqnarray*}
That is
\begin{equation}\label{2-5}
(u,v)(x,t)=(U,V)(x-st+\a-\b)+(\phi_x,\psi_x)(x,t).
\end{equation}
Then by  \eqref{ph} and  \eqref{2.11}, $(\phi,\psi)$ satisfies
\begin{equation}\label{nonlinear system}
\begin{cases}
\phi_t=D\phi_{xx}+\chi V\phi_x+\chi U\psi_x+\chi \phi_x\psi_x,~~t>0, \ x\in\R_+,\\
\psi_t=\phi_x,
\end{cases}
\end{equation}
with initial  condition
\be\label{3.3}
(\phi_0, \psi_0)(x) =-\int_x^{\infty}(u_0(y)-U(y+\a-\b),v_0(y)-V(y+\a-\b))dy,
\ee
and boundary condition
\be\label{3.6}
\begin{split}
\phi(0,t)=-s\int_t^\infty(u_--U(-s\tau+\a-\b))d\tau+\int_t^\infty(\eta(\tau)-\eta_-)d\tau\triangleq A(t).
\end{split}\ee
Note that there are two distinguishing differences from the stability of traveling wave solutions in the whole space $\R$ established in the previous works (e.g. see \cite{jin13, Li09}). First the initial perturbation here is not required to be of zero integral. Second the boundary value is dynamic (depending on time).
The solution space of the reformulated problem \eqref{nonlinear system}-\eqref{3.6} is
\begin{equation*}
\begin{split}
X(0,T):=\{&(\phi(x,t),\psi(x,t))\big|\phi\in C([0,T]; H^2_w),\phi_x\in L^2((0,T);H^2_w)\\&\psi\in C([0,T]; H^2),\psi_x\in C([0,T]; H^1_w)\cap L^2((0,T);H^1_w)\},
\end{split}\end{equation*}
for $T\in(0,+\infty]$, where the weight function $w$ is defined by $\eqref{weight fucntion}$. Set
\begin{equation*}
N(t)\triangleq\sup_{\tau\in[0,t]}(\|\psi(\cdot,\tau)\|+\|\psi_x(\cdot,\tau)\|_{1,w}+\|\phi(\cdot,\tau)\|_{2,w}).
\end{equation*}
Clearly, if $\phi\in H^2_w$, then $\phi\in H^2$ since $w\geq1$. Thus the Sobolev embedding theorem yields
\begin{equation}\label{Sobolev}
\sup_{\tau\in[0,t]}\{\|\phi(\cdot,\tau)\|_{L^\infty},\|\phi_x(\cdot,\tau)\|_{L^\infty},\|\psi(\cdot,\tau)\|_{L^\infty},
\|\psi_x(\cdot,\tau)\|_{L^\infty}\}\leq N(t).
\end{equation}

For the reformulated problem \eqref{nonlinear system}-\eqref{3.6}, we shall prove the following results.
\begin{theorem}\label{global existence}
There exists a positive constant $\varepsilon_1$, such that if $N(0)+\b^{-1}+\delta\leq\varepsilon_1$, then the initial-boundary value problem \eqref{nonlinear system}-\eqref{3.6} has a unique global solution $(\phi,\psi)\in X(0,\infty)$ such that
\begin{equation}\label{priori}
\begin{split}
\|\phi\|_{2,w}^2+\|\psi\|^2&+\|\psi_x\|_{1,w}^2+\int_0^t(\|\phi_x(\tau)\|_{2,w}^2+\|\psi_x(\tau)\|_{1,w}^2)d\tau\\
&\leq C\left(\|\phi_0\|_{2,w_0}^2+\|\psi_0\|^2+\|\psi_{0x}\|_{1,w_0}^2+e^{-\frac{s\b}{D}}+\delta\right)\leq C (N^2(0)+e^{-\frac{s\b}{D}}+\delta)
\end{split}
\end{equation}
for any $t\in [0,\infty)$, where $w_0:=w(x,0)$. Moreover, it holds that
\begin{equation}\label{long-time behavior}
\sup\limits_{x\in\R^+}\abs{(\phi_x,\psi_x)(x,t)}\to 0~~as~~t\to\infty.
\end{equation}
\end{theorem}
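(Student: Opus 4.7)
The plan is the classical continuation argument for stability of traveling waves: establish local well-posedness in $X(0,T_0)$ for some $T_0>0$, derive a uniform a priori estimate of the form \eqref{priori} under the bootstrap assumption $N(t)\le \varepsilon$ for a suitably small $\varepsilon$, and combine the two to extend the local solution to $[0,\infty)$. Because $U,V$ are smooth bounded functions in the co-moving variable, the weight $w$ is bounded below by $1$, and the boundary datum $A(t)=\phi(0,t)$ from \eqref{3.6} is smooth and pointwise small thanks to \eqref{around}, local existence is routine by a contraction/iteration argument applied to the linearization; I shall focus on the a priori step.

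For the energy estimate I would run a hierarchy of weighted $L^2$ identities at the levels of $(\phi,\psi)$, $(\phi_x,\psi_x)$ and $(\phi_{xx},\psi_{xx})$. Two structural ingredients are crucial. First, the formulas \eqref{explicit} and \eqref{1-6} give $\chi U=\chi u_-/w$ and $\chi V=-s/w$, while the weight satisfies $w_t=-sw_x=-\frac{s^2}{D}(w-1)$, so that the convective terms in \eqref{nonlinear system} align with the moving weight. Second, multiplying the first equation of \eqref{nonlinear system} by $w\phi$ and the second by a suitable weighted multiplier proportional to $Uw\psi$, adding, and integrating by parts produces a dissipation identity of the schematic form
\begin{equation*}
\tfrac{1}{2}\tfrac{d}{dt}\mathcal{E}_0(t) + D\|\sqrt{w}\,\phi_x\|^2 + c_0\left(\|\sqrt{w_x}\,\phi\|^2+\|\sqrt{Uw_x}\,\psi\|^2\right) = \mathcal{B}_0(t) + \mathcal{N}_0(t),
\end{equation*}
where $\mathcal{E}_0(t)$ is equivalent to $\|\phi\|_w^2+\|\sqrt{Uw}\,\psi\|^2$, the constant $c_0>0$ depends only on $u_-,v_-,D$, the cubic remainder $\mathcal{N}_0(t)$ from $\chi\phi_x\psi_x$ is absorbed by $\varepsilon\cdot(\text{dissipation})$, and $\mathcal{B}_0(t)$ collects all boundary contributions at $x=0$. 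The higher-order identities are obtained analogously after differentiating in $x$ and using $\psi_t=\phi_x$ to trade time derivatives of $\psi$ for space derivatives of $\phi$; commutators with $U,V$ are lower order because $U',V'$ decay exponentially in the co-moving variable.

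The principal obstacle, and the reason for the assumptions $\beta^{-1}+\delta\le\varepsilon_0$ together with $(\|\Phi_0\|_1+\delta)\beta\le 1$, lies in the control of $\mathcal{B}_0(t)$ and its higher-order analogues. Since $\phi(0,t)=A(t)\neq 0$ and varies with $t$, these contain terms such as $D\phi_x(0,t)A(t)w(0,t)$, $sA(t)^2$, and $\chi U(-st+\alpha-\beta)A(t)\psi(0,t)w(0,t)$. I would bound them using three ingredients: (i) from the explicit formula \eqref{explicit} and \eqref{around}, the pointwise bound $|A(t)|\le C(\delta+e^{-s\beta/(2D)})$ together with the $L^1(0,\infty)$ integrability of $A$ and $A_t$ inherited from \eqref{around}; (ii) the bound $1\le w(0,t)\le 2$ and $w(0,t)\to 1$ as $t\to\infty$, which uses that the shift $\alpha$ given by \eqref{alpha} satisfies $|\alpha|\le C(\|\Phi_0\|_1+\delta+\beta^{-1})$, explaining precisely why the auxiliary hypothesis $(\|\Phi_0\|_1+\delta)\beta\le 1$ is the right one to keep $\alpha-\beta$ negative and $w(0,t)$ moderate; and (iii) a trace inequality $|\phi_x(0,t)|^2\le C\|\phi_x\|\,\|\phi_{xx}\|$ combined with Young's inequality to absorb boundary occurrences of $\phi_x$ into the bulk dissipation. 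Together these yield $\int_0^t|\mathcal{B}_0(\tau)|\,d\tau\le \tfrac14\int_0^t(\text{dissipation})\,d\tau+C(\delta+e^{-s\beta/D})$ and analogous bounds at higher order, producing exactly the right-hand side of \eqref{priori}. A standard continuity argument then extends the local solution to $[0,\infty)$ with the stated bound.

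For the large-time behavior \eqref{long-time behavior}, the estimate \eqref{priori} yields $(\phi_x,\psi_x)\in L^2((0,\infty);H^1_w)\cap L^\infty((0,\infty);H^1_w)$; testing the once-differentiated equations of \eqref{nonlinear system} against suitable multipliers shows that $\tfrac{d}{dt}\|\phi_x\|^2$ and $\tfrac{d}{dt}\|\psi_x\|^2$ lie in $L^1(0,\infty)$, whence $\|\phi_x(\cdot,t)\|+\|\psi_x(\cdot,t)\|\to 0$ as $t\to\infty$, and the Gagliardo--Nirenberg inequality $\|f\|_{L^\infty}^2\le C\|f\|\,\|f_x\|$ upgrades this to the uniform decay in \eqref{long-time behavior}.
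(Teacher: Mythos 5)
Your skeleton (local existence plus a uniform a priori estimate under a smallness bootstrap, a three-level weighted energy hierarchy, boundary terms controlled through the explicit datum $A(t)=\phi(0,t)$, its derivative $A'(t)$, trace inequalities, and the smallness of the shift $\alpha$, and the final $L^2$--$L^\infty$ interpolation for \eqref{long-time behavior}) matches the paper's strategy. But there is a genuine gap at the heart of the a priori estimate: nothing in your proposal actually produces the dissipation $\int_0^t\|\psi_x\|_{1,w}^2\,d\tau$ appearing in \eqref{priori}. The second equation $\psi_t=\phi_x$ is a pure transport relation with no damping, and the zeroth-order identity you write down is not what the multipliers give: since $Uw\equiv u_-$ and $\chi Vw\equiv -s$ are exact constants, the cross terms $\chi Uw\phi\psi_x+\chi u_-\psi\phi_x$ form a perfect $x$-derivative, the potential $\phi^2$-contributions from $-w_t/2$ and from integrating $Dw\phi\phi_{xx}$ and $\chi Vw\phi\phi_x$ by parts largely cancel (the paper's version of this computation leaves only the single good-sign term $-\frac{\chi}{2s}\frac{U_x}{U}\phi^2$, which it then discards), and no $\psi$- or $\psi_x$-dissipation appears at all. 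Consequently the cubic term $\chi\phi\phi_x\psi_x/U$ leaves a remainder $N(t)\int_0^t\int_0^\infty \psi_x^2/U$ on the right-hand side that your scheme has no means of absorbing, and the estimate does not close.

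The paper generates this missing dissipation with two devices you would need to add. First, an \emph{elliptic estimate}: rewrite the first equation as $D\phi_{xx}+\chi U\psi_x=\phi_t-\chi V\phi_x-\chi\phi_x\psi_x$, square, and divide by $U$; the cross term $2D\chi\phi_{xx}\psi_x=D\chi(\psi_x^2)_t$ is a perfect time derivative (by $\psi_{xt}=\phi_{xx}$), so after integration one obtains $\chi^2\int_0^t\int_0^\infty U\psi_x^2$ plus $\int_0^t\int_0^\infty\phi_{xx}^2/U$, controlled by the already-available $\int_0^t\int_0^\infty\phi_t^2/U$ coming from the multiplier $\phi_t/U$. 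Second, since $U$ degenerates as $x\to+\infty$ while the weight $w\sim 1/U$ blows up there, $\int U\psi_x^2$ must be upgraded to $\int w\psi_x^2$ by splitting the domain at $x=st-\alpha+\beta$: behind this point $U\ge \frac{u_-}{4}w$, and ahead of it one tests $\psi_{xt}=\phi_{xx}$ against $w\psi_x$ and uses $-w_t=\frac{s^2}{D}e^{\frac{s}{D}(x-st+\alpha-\beta)}\ge\frac{s^2}{2D}w$ there to extract $\frac{s^2}{4D}\int_0^t\int_{s\tau-\alpha+\beta}^{\infty}w\psi_x^2$. The same two steps are repeated at second order for $\psi_{xx}$. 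Without these mechanisms the claimed bound \eqref{priori} cannot be reached, so this is a missing idea rather than a presentational shortcut. (A minor further point: the hypothesis $(\|\Phi_0\|_1+\delta)\beta\le 1$ is used in the paper to control the weighted norm of $\phi_0-\Phi_0$ over the long interval $[0,\beta]$ introduced by the shift $\alpha$, not to keep $w(0,t)$ moderate, which already follows from $|\alpha|\ll 1\ll\beta$.)
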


To apply the results of Theorem \ref{global existence} to the problem \eqref{ph}-\eqref{new-boundary data}, we need to further clarify the relation between the initial data $(\phi_0,\psi_0)$ and $(\Phi_0,\Psi_0)$ since they not exactly the same. In view of \eqref{alpha}, the shift $\a=\a(\b)$ is a function of $\b$ and the asymptotic behavior of $\a$ can be characterized as follows.
\begin{lemma}\label{lem1}
If $\|\Phi_0\|_1+\delta+\b^{-1}\rightarrow0$, then $\a\rightarrow0$.
\end{lemma}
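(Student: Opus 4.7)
The plan is to estimate each of the three terms in the explicit formula \eqref{alpha} for $\a$ separately, showing each one vanishes under the stated smallness hypothesis. Recall
\[
\a=-\frac{1}{u_-}\ii[u_0(x)-U(x-\b)]dx-\frac{D}{s}\ln(1+ e^{-\frac{s\b}{D}})+\frac{1}{u_-}\int_0^\infty(\eta(t)-\eta_-)dt,
\]
and that $u_-$ and $s$ are fixed constants determined by $\eta_-$ through \eqref{wave}.

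First I would rewrite the integral term involving the initial data using the antiderivative $\Phi_0$. By its definition \eqref{ict} (restricted to the $\phi$-component),
\[
\ii[u_0(x)-U(x-\b)]dx=-\Phi_0(0).
\]
Since $\Phi_0\in H^1(\R_+)$ (embedded in the assumption $\|\Phi_0\|_1<\infty$), the one-dimensional Sobolev embedding $H^1(\R_+)\hookrightarrow L^\infty(\R_+)$ yields $|\Phi_0(0)|\le C\|\Phi_0\|_1$. Hence the first summand of $\a$ is controlled by $(C/u_-)\|\Phi_0\|_1\to0$ as $\|\Phi_0\|_1\to 0$.

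Second, for the logarithmic term, the elementary bound $\ln(1+y)\le y$ for $y\ge0$ gives
\[
0\le \frac{D}{s}\ln\bigl(1+e^{-\frac{s\b}{D}}\bigr)\le \frac{D}{s}\,e^{-\frac{s\b}{D}},
\]
which tends to $0$ as $\b\to\infty$, i.e.\ as $\b^{-1}\to0$. Third, for the $\eta$-term, assumption \eqref{around} directly provides $\int_0^\infty|\eta(t)-\eta_-|\,dt\le\delta$, so this contribution is bounded by $\delta/u_-\to0$ as $\delta\to0$.

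Combining the three estimates yields
\[
|\a|\le C\bigl(\|\Phi_0\|_1+e^{-\frac{s\b}{D}}+\delta\bigr),
\]
from which the conclusion $\a\to0$ under $\|\Phi_0\|_1+\delta+\b^{-1}\to 0$ is immediate. There is no real obstacle here; the only point requiring slight care is justifying the pointwise value $\Phi_0(0)$ via the Sobolev trace, but this is automatic from $\Phi_0\in H^1(\R_+)$ which is subsumed by $\|\Phi_0\|_1$.
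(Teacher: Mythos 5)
Your proof is correct and follows essentially the same route as the paper: both rewrite the initial-data integral as $\Phi_0(0)$ and bound it by $\|\Phi_0\|_1$ (via the $H^1(\R_+)\hookrightarrow L^\infty$ trace), use $\ln(1+y)\le y$ for the logarithmic term, and invoke \eqref{around} for the $\eta$-term, arriving at the same bound $|\a|\le C(\|\Phi_0\|_1+e^{-\frac{s\b}{D}}+\delta)$.
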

\begin{proof} By \eqref{alpha}, one has
\[\a=\frac{1}{u_-}\Phi_0(0)-\frac{D}{s}\ln(1+\kappa e^{-\frac{s\b}{D}})+\frac{1}{u_-}\int_0^\infty(\eta(t)-\eta_-)dt.\]
Since $|\Phi_0(0)|\leq \|\Phi_0\|_1$, and $\ln(1+ e^{-\frac{s\b}{D}})\leq e^{-\frac{s\b}{D}}$, it follows from \eqref{around} that
\begin{equation}\label{3.8}
|\a|\leq C (\|\Phi_0\|_1+e^{-\frac{s\b}{D}}+\delta)\rightarrow0,\end{equation}
as $\|\Phi_0\|_1+\delta+\b^{-1}\rightarrow0$.\end{proof}

We now present the relation between $(\phi_0,\psi_0)$ and $(\Phi_0,\Psi_0)$.
\begin{lemma}\label{lem2}
Let $(\|\Phi_0\|_1+\delta)\b$ be bounded for all $\beta>0$. If $\|\Phi_0\|_{2,w_0}+\|\Psi_0\|+\|\Psi_{0x}\|_{1,w_0}+\delta+\b^{-1}\rightarrow0$, then $\|\phi_0\|_{2,w_0}+\|\psi_0\|+\|\psi_{0x}\|_{1,w_0}\rightarrow0$.
\end{lemma}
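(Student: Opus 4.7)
The plan is to express the difference $(\phi_0-\Phi_0,\psi_0-\Psi_0)$ explicitly in terms of the shift $\a$, bound it in the required norms using the explicit form \eqref{explicit}, and then invoke Lemma~\ref{lem1} together with the balance condition $(\|\Phi_0\|_1+\delta)\b\leq 1$ to conclude. Subtracting \eqref{ict} from \eqref{3.3} and interchanging the order of integration yields the clean identities
\[
\phi_0(x)-\Phi_0(x) = -\int_0^\a U(x+\theta-\b)\,d\theta,\qquad \psi_0(x)-\Psi_0(x) = -\int_0^\a V(x+\theta-\b)\,d\theta,
\]
together with the corresponding formulas for the $x$-derivatives obtained by differentiating under the integral (replacing $U,V$ by $U',V'$ and then $U'',V''$).

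From the explicit form \eqref{explicit}, $U$ and $V$ tend to the nonzero constants $u_-,v_-$ as $z\to -\infty$ and decay exponentially as $z\to+\infty$, whereas all derivatives $U^{(k)},V^{(k)}$ with $k\geq 1$ decay exponentially at both ends. Applying Cauchy--Schwarz in $\theta$ and changing variable $y=x-\b$, I get
\[
\|\phi_0-\Phi_0\|_{w_0}^2 + \|\psi_0-\Psi_0\|^2 \leq C\a^2\!\int_{-\b}^\infty \!\!\bigl[(1+e^{s(y+\a)/D})U^2(y) + V^2(y)\bigr]dy \leq C\a^2\b,
\]
the factor $\b$ arising from the unweighted tail at $y\to-\infty$ where $U,V$ are bounded away from zero. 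The analogous bounds for the $x$-derivatives give
\[
\|(\phi_0-\Phi_0)_x\|_{1,w_0} + \|(\psi_0-\Psi_0)_x\|_{1,w_0} \leq C|\a|,
\]
since for $k\geq 1$ the weighted integrals $\int_{-\b}^\infty(1+e^{sy/D})(|U^{(k)}|^2+|V^{(k)}|^2)\,dy$ are finite uniformly in $\b$.

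To close the argument I would combine the bound $|\a|\leq C(\|\Phi_0\|_1 + e^{-s\b/D}+\delta)$ from Lemma~\ref{lem1} with the hypothesis $(\|\Phi_0\|_1+\delta)\b\leq 1$ to obtain
\[
\a^2\b \leq C(\|\Phi_0\|_1+\delta)\cdot(\|\Phi_0\|_1+\delta)\b + C\b\, e^{-2s\b/D} \leq C(\|\Phi_0\|_1+\delta) + C\b\, e^{-2s\b/D} \longrightarrow 0.
\]
Hence $|\a|\sqrt{\b}\to 0$, so $\|\phi_0-\Phi_0\|_{2,w_0}+\|\psi_0-\Psi_0\|+\|(\psi_0-\Psi_0)_x\|_{1,w_0}\to 0$, and the conclusion follows by the triangle inequality after observing that $w_0=1+e^{s(x+\a-\b)/D}$ is equivalent to $1+e^{s(x-\b)/D}$ since $|\a|$ is bounded by Lemma~\ref{lem1}. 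The main obstacle is the factor $\b$ appearing in the unweighted $L^2$ norms of $U,V$ on $\R_+$ (stemming from the nonzero left limits $u_-,v_-$); the balance condition $(\|\Phi_0\|_1+\delta)\b\leq 1$ is imposed precisely to compensate this growth against the smallness of $|\a|$, and it distinguishes the half-space setting from the whole-space one where no such compatibility is required.
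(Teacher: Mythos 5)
Your proposal is correct and follows essentially the same route as the paper: the same identity $\phi_0-\Phi_0=-\int_0^\a U(x+\theta-\b)\,d\theta$ (and its analogue for $\psi_0$), the same splitting of the integral at $x=\b$ producing the critical $\a^2\b$ term, and the same use of Lemma \ref{lem1} together with the boundedness of $(\|\Phi_0\|_1+\delta)\b$ to kill that term. Your remark that the derivative terms carry no factor of $\b$ (because $U',V'$ decay at both ends) is a slightly sharper observation than the paper's ``similarly'' for $\|B\|_{2,w_0}$, but the argument is otherwise the same.
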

\begin{proof} It first follows from \eqref{3.3} and \eqref{ict} that
\[\begin{split}\phi_0(x) &=-\int_x^{\infty}(u_0(y)-U(y+\a-\b))dy\\
&=\Phi_0(x)+\int_x^{\infty}(U(y+\a-\b)-U(y-\b))dy\\
&=\Phi_0(x)+\int_x^{\infty}\int_0^\a U'(y+\theta-\b)d\theta dy\\
&=\Phi_0(x)-\int_0^\a U(x+\theta-\b)d\theta\\
&\triangleq\Phi_0(x)+B(x).\end{split}\]
By Lemma  \ref{lem1}, $|\a|\ll1$. Thus, owing to \eqref{explicit}, we have
$|B(x)|\leq\frac{|\a|u_-}{ e^{\frac{s}{D}(x-1-\b)}+1}.$
This gives us that
\[\begin{split}
\|B\|_{w_0}^2&\leq\a^2u_-^2\int_0^\infty\frac{e^{\frac{s}{D}(x+1-\b)}+1}{( e^{\frac{s}{D}(x-1-\b)}+1)^2}dx\\
&=\a^2u_-^2\left(\int_0^{\b}\frac{e^{\frac{s}{D}(x+1-\b)}+1}{( e^{\frac{s}{D}(x-1-\b)}+1)^2}dx+\int_{\b}^\infty\frac{ e^{\frac{s}{D}(x+1-\b)}+1}{( e^{\frac{s}{D}(x-1-\b)}+1)^2}dx\right)\\
&\leq\a^2u_-^2\left(\int_0^{\b}e^{\frac{s}{D}(x+1-\b)}dx+\b+\int_{\b}^\infty\frac{e^{\frac{s}{D}(x+1-\b)}}{e^{\frac{2s}{D}(x-1-\b)}}dx+\int_{\b}^\infty\frac{1}{ e^{\frac{2s}{D}(x-1-\b)}}dx\right)\\
&=\a^2u_-^2 s^{-1}D e^{\frac{s}{D}}\left(1-e^{-\frac{s}{D}\b}+e^{\frac{2s}{D}}+e^{\frac{s}{D}}\right)+\a^2\b u_-^2\\
&\leq C\a,
\end{split}\]
where we have used the estimate \eqref{3.8} in the last inequality. Similarly, we have $\|B\|_{2,w_0}^2\leq C\a$ for some $C>0$. By Lemma \ref{lem1}, we then have
\[\|\phi_0\|_{2,w_0}^2\leq\|\Phi_0\|_{2,w_0}^2+\|B\|_{2,w_0}^2\rightarrow0,\]
as  $\|\Phi_0\|_{2,w_0}+\delta+\b^{-1}\rightarrow0$. Similarly, $\|\psi_0\|+\|\psi_{0x}\|_{1,w_0}\rightarrow0$
as  $\|\Psi_0\|+\|\Psi_{0x}\|_{1,w_0}+\delta+\b^{-1}\rightarrow0$.
\end{proof}

Clearly Theorem \ref{thm-1} is a direct consequence of Theorem \ref{global existence} and Lemma \ref{lem2}. In the remaining part of this section, we will focus on the proof of Theorem \ref{global existence}, which follows from the local existence theorem and the {\it a priori} estimate given below.
\begin{proposition}[Local existence]\label{local existence}
 For any $\varepsilon_0>0$, there exists a positive constant $T_0$ depending on $\varepsilon_0$ such that if $(\phi_0,\psi_0)\in H^2_{w_0}$ with $N(0)+\delta+\b^{-1}\leq \varepsilon_0$, then  the problem \eqref{nonlinear system}-\eqref{3.6} has a unique solution $(\phi,\psi)\in X(0,T_0)$ satisfying $N(t)\leq 2\varepsilon_0$ for any $0\leq t\leq T_0$.
\end{proposition}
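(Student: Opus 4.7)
My plan is to construct the local solution by a Picard iteration on the reformulated system \eqref{nonlinear system}--\eqref{3.6}, after first converting the inhomogeneous boundary condition into a homogeneous one. I would choose a smooth lift $\Theta(x,t):=A(t)e^{-\lambda x}$ with $\lambda>s/D$ large enough that $\Theta$ lies in the relevant weighted Sobolev spaces; using \eqref{around} together with the exponential decay of $u_- - U(-s\tau+\a-\b)$ as $\tau\to\infty$, one verifies that $|A(t)|+|A'(t)|$ is bounded uniformly on $[0,T_0]$, so all relevant weighted norms of $\Theta$ are controlled. Setting $\tilde\phi:=\phi-\Theta$ turns \eqref{nonlinear system}--\eqref{3.6} into
\begin{equation*}
\begin{cases}
\tilde\phi_t=D\tilde\phi_{xx}+\chi V\tilde\phi_x+\chi U\psi_x+\chi(\tilde\phi_x+\Theta_x)\psi_x+F,\\
\psi_t=\tilde\phi_x+\Theta_x,
\end{cases}
\end{equation*}
with $\tilde\phi(0,t)=0$, where $F:=-\Theta_t+D\Theta_{xx}+\chi V\Theta_x$ is a known forcing.

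Starting from $(\tilde\phi^{(0)},\psi^{(0)})=(0,\psi_0)$, I would define $(\tilde\phi^{(n+1)},\psi^{(n+1)})$ iteratively by solving the linear parabolic problem for $\tilde\phi^{(n+1)}$ with the nonlinear terms evaluated at the $n$th iterate and with zero Dirichlet boundary datum at $x=0$, and then setting $\psi^{(n+1)}:=\psi_0+\int_0^t(\tilde\phi^{(n+1)}_x+\Theta_x)\,d\tau$. Standard theory for linear parabolic equations on the half line provides a unique solution $\tilde\phi^{(n+1)}\in C([0,T_0];H^2_w)\cap L^2(0,T_0;H^3_w)$. Weighted energy estimates, obtained by multiplying the equation by $w\tilde\phi^{(n+1)}$, its first-differentiated version by $w\tilde\phi^{(n+1)}_x$, and its second-differentiated version by $w\tilde\phi^{(n+1)}_{xx}$, and integrating over $\R_+$, yield bounds of the schematic form
\begin{equation*}
\|\tilde\phi^{(n+1)}(t)\|_{2,w}^2+\int_0^t\|\tilde\phi^{(n+1)}_x(\tau)\|_{2,w}^2\,d\tau\leq C\,N(0)^2+CT_0\bigl(1+N^{(n)}(t)^2\bigr),
\end{equation*}
while the ODE for $\psi^{(n+1)}$ gives $\|\psi^{(n+1)}(t)\|^2+\|\psi^{(n+1)}_x(t)\|_{1,w}^2\leq 2N(0)^2+CT_0\sup_\tau\|\tilde\phi^{(n+1)}_x\|_{2,w}^2+CT_0$. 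Choosing $T_0=T_0(\varepsilon_0)$ sufficiently small, these combine to $N^{(n+1)}(t)\leq 2\varepsilon_0$ on $[0,T_0]$, so the iteration map preserves the closed ball $\{N\leq 2\varepsilon_0\}\subset X(0,T_0)$. A parallel estimate on the differences $(\tilde\phi^{(n+1)}-\tilde\phi^{(n)},\psi^{(n+1)}-\psi^{(n)})$, measured in a lower-order norm such as $\|\cdot\|_{1,w}+\|\cdot\|$, produces a contraction constant $\leq CT_0^{1/2}$; Banach's fixed point theorem then furnishes a unique limit $(\tilde\phi,\psi)\in X(0,T_0)$, and $\phi:=\tilde\phi+\Theta$ solves \eqref{nonlinear system}--\eqref{3.6} with $N(t)\leq 2\varepsilon_0$. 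Uniqueness in $X(0,T_0)$ follows from the same contraction estimate applied to two candidate solutions.

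The hard part will be managing the time-dependent exponentially growing weight $w(x,t)=1+e^{(s/D)(x-st+\a-\b)}$ together with the inhomogeneous, time-varying boundary datum $A(t)$ and with the hyperbolic component $\psi$, which has no spatial dissipation of its own. Coupling terms such as $\int_0^\infty wU\psi_x\tilde\phi_x\,dx$ and $\int_0^\infty wV\tilde\phi_x\tilde\phi_{xx}\,dx$ must be bounded; this works because $U$, $V$, $U'$, $V'$ all decay like $e^{-(s/D)z}$ as $z=x-st+\a-\b\to+\infty$, so that $wU$, $w|V|$, $w|U'|$, $w|V'|$ are bounded uniformly in $(x,t)$ — the weight is engineered precisely to balance the wave-profile decay. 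The time-derivative $w_t=-\tfrac{s^2}{D}e^{(s/D)(x-st+\a-\b)}\leq 0$ contributes a favorable sign in $\tfrac{d}{dt}\|\sqrt{w}\tilde\phi\|^2$, and the contributions of the boundary lift $\Theta$ to the forcing $F$ are absorbed into the factor $T_0$ on the right-hand side of the energy estimate, which is harmless for local existence since $T_0=T_0(\varepsilon_0)$ may be chosen arbitrarily small.
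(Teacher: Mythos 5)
Your proposal is a correct and careful implementation of exactly what the paper does here: the paper merely states that local existence ``can be proved using the standard iteration method (see [Nishida, 1978])'' and omits all details, and your boundary-lift plus Picard-iteration scheme with weighted energy estimates and a lower-order contraction argument is a standard realization of that method. The only slip worth noting is that your bound for $\psi^{(n+1)}$ should invoke $\int_0^t\|\tilde\phi^{(n+1)}_x\|_{2,w}\,d\tau\leq T_0^{1/2}\bigl(\int_0^{T_0}\|\tilde\phi^{(n+1)}_x\|_{2,w}^2\,d\tau\bigr)^{1/2}$ rather than $T_0\sup_\tau\|\tilde\phi^{(n+1)}_x\|_{2,w}^2$, since the parabolic step controls $\tilde\phi^{(n+1)}_{xxx}$ only in the time-integrated sense, not uniformly in time.
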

\begin{proposition}[A priori estimate]\label{a priori estimate} Assume that $(\phi,\psi)\in X(0,T)$ is a solution obtained in Proposition $\ref{local existence}$ for a positive constant $T$. Then there is a positive constant $\varepsilon_2>0$, independent of $T$, such that if
\begin{equation*}
N(t)\leq \varepsilon_2
\end{equation*}
for any $0\leq t\leq T$, then the solution $(\phi,\psi)$ of \eqref{nonlinear system}-\eqref{3.6} satisfies $\eqref{priori}$ for any $0\leq t\leq T$.
\end{proposition}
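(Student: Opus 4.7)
My plan is to prove \eqref{priori} by a hierarchy of weighted energy estimates up to second order on the reformulated system \eqref{nonlinear system}-\eqref{3.6}, exploiting three algebraic identities built into the weight $w(x,t)=1+e^{(s/D)(x-st+\a-\b)}$: the transport cancellation $w_t+sw_x=0$; the reductions $wU\equiv u_-$ and $wV\equiv v_-$ (which turn weighted integrals against the wave profile into constants); and the nonnegativity $w_x=(s/D)(w-1)\geq 0$. Compared with the whole-line stability analyses of \cite{jin13,Li09,Mei-peng-wang}, the principal new difficulty is the dynamic nonzero-flux boundary $\phi(0,t)=A(t)$, which produces a boundary trace at every order of derivative.

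\textbf{Main steps.} First, from the explicit representation \eqref{2.11} of $A(t)$ together with the exponential decay of $u_--U(-s\tau+\a-\b)$ (since $\a-\b<0$ by Lemma \ref{lem1}) and the smallness \eqref{around}, I derive the bounds
\[ |A(t)|+|A'(t)|+\ii\big(|A(\tau)|^2+|A'(\tau)|\big)\,d\tau \leq C(e^{-s\b/D}+\delta). \]
Second, multiplying the first equation of \eqref{nonlinear system} by $w\phi$, the second by $\chi u_-\psi$, and adding, the three weight identities generate pivotal cancellations: the moving-weight contribution vanishes after integrating $-D\int w_x\phi\phi_x$ by parts and using $w_t=-sw_x$; the identity $wV=v_-$ reduces $\chi\int wV\phi\phi_x$ to the boundary term $-(\chi v_-/2)A^2(t)$; and the cross term $\chi u_-\int\phi\psi_x$ coming from $\chi wU\psi_x\cdot\phi$ cancels exactly the IBP of the $\psi$-test, leaving only $-\chi u_-\psi(0,t)A(t)$. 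The outcome is
\[ \frac{d}{dt}\big(\norm{\phi}_w^2+\chi u_-\norm{\psi}^2\big)+2D\norm{\phi_x}_w^2 \leq C\big(|A|^2+|A||\phi_x(0,t)|+|A||\psi(0,t)|\big)+\chi\int w\phi\phi_x\psi_x\,dx. \]
Third, differentiating the $\phi$-equation in $x$ once and twice and testing with $w\phi_x$ and $w\phi_{xx}$ yields $H^2_w$-control of $\phi$ with dissipation $\norm{\phi_{xx}}_w^2$ and $\norm{\phi_{xxx}}_w^2$; the boundary trace $\phi_{xx}(0,t)$ is eliminated through the equation at $x=0$,
\[ D\phi_{xx}(0,t)=A'(t)-\chi V(0)\phi_x(0,t)-\chi U(0)\psi_x(0,t)-\chi\phi_x(0,t)\psi_x(0,t), \]
while traces $|\phi_x(0,t)|^2\leq C\norm{\phi_x}_w\norm{\phi_{xx}}_w$ and $|\psi(0,t)|^2\leq C\norm{\psi}\norm{\psi_x}_w$ are absorbed by Young's inequality into the dissipation on the left. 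To extract $\norm{\psi_x}_w$-dissipation from the hyperbolic $\psi$-equation, I test the first equation of \eqref{nonlinear system} by $w\psi_x$: the identity $wU=u_-$ produces $\chi\int wU\psi_x^2=\chi u_-\int\psi_x^2\,dx$ as a genuine dissipation on the right, while the left-hand side $\int w\psi_x\phi_t$ becomes, via $\psi_t=\phi_x$ and integration by parts, a total time derivative $-\frac{d}{dt}\int w\psi\phi_x\,dx$ plus terms controllable by already-estimated quantities; a further $x$-differentiation gives the analogous $\norm{\psi_{xx}}_w$-dissipation.

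\textbf{Closure and main obstacle.} Adding the four estimates with carefully chosen positive coefficients, the boundary contributions are dominated by $C(e^{-s\b/D}+\delta)$ from Step 1, the cubic nonlinearities (all of the form $\int w\phi_x^j\psi_x^k$ with $j+k\geq 3$) by $CN(t)\cdot(\text{dissipation})$ absorbed through $N(t)\leq\varepsilon_2$, and \eqref{priori} follows upon time integration. The main obstacle is the tightly coupled treatment of the dynamic boundary data: boundary traces produced at every order must be closed through both the equation itself (to trade $\phi_{xx}(0,t)$ for $A'(t)$) and trace interpolation, while simultaneously respecting the smallness of $\b^{-1}$ and $\delta$. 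A secondary subtlety is deriving weighted $\psi_x$-dissipation from a pure transport equation, resolved by the algebraic identity $wU\equiv u_-$ which embeds the traveling wave into the weight.
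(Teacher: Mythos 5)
Your overall architecture coincides with the paper's: anti-derivative reformulation, weighted energy estimates with the weight $w\sim 1/U$ (your identities $wU\equiv u_-$, $wV\equiv v_-$, $w_t+sw_x=0$ are correct and are exactly what makes the paper's multiplier $\phi/U$ work), closure of the boundary traces through $A(t)$ and through the equation evaluated at $x=0$, and absorption of the cubic terms by $N(t)\le\varepsilon_2$. The basic $L^2$ step, the cancellation of the cross terms into $\chi u_-(\phi\psi)_x$ up to the boundary value $-\chi u_-A(t)\psi(0,t)$, and the elimination of $\phi_{xx}(0,t)$ via $D\phi_{xx}(0,t)=A'(t)-\chi(U\psi_x+V\phi_x+\phi_x\psi_x)(0,t)$ all match the paper.

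There is, however, a genuine gap in the step that produces the weighted dissipation $\int_0^t\ii w\psi_x^2$ (and likewise $\int_0^t\ii w\psi_{xx}^2$), which is the quantity every other error term $CN(t)\int_0^t\ii w\psi_x^2$ must be absorbed into. You test the first equation of \eqref{nonlinear system} by $w\psi_x$ and propose to rewrite $\ii w\psi_x\phi_t\,dx$ as $-\frac{d}{dt}\ii w\psi\phi_x\,dx$ plus controllable terms. But every term generated this way ($\ii w\psi\phi_x$, $\ii w_t\psi\phi_x$, $\ii w_x\psi\phi_t$) pairs the \emph{unweighted} function $\psi$ against the exponentially growing weight $w$ (note $w_x,\,-w_t\sim w$ as $x\to\infty$), and none of these integrals is finite a priori: in $X(0,T)$ one only has $\psi\in H^2$ while $\psi_x\in H^1_w$, and the hypotheses impose $\|\Psi_0\|$, not $\|\Psi_0\|_{w_0}$. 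Substituting the equation for $\phi_t$ instead is circular (it reproduces $\chi u_-\ii\psi_x^2$ and cancels the dissipation you just created), and a direct Cauchy--Schwarz requires a prior bound on $\int_0^t\ii w\phi_t^2$, which your scheme never establishes, since testing the differentiated equations by $w\phi_x$, $w\phi_{xx}$ only yields $\phi_{xx}$, $\phi_{xxx}$ dissipation. The paper closes this loop with two devices you would need to add: a multiplier $\phi_t/U$ estimate giving $\int_0^t\ii \phi_t^2/U$, followed by an ``elliptic'' estimate (square $D\phi_{xx}+\chi U\psi_x=\phi_t-\chi V\phi_x-\chi\phi_x\psi_x$ and divide by $U$; the cross term $2D\chi\phi_{xx}\psi_x=D\chi(\psi_x^2)_t$ is an exact time derivative carrying \emph{no} weight), which yields $\int_0^t\ii U\psi_x^2$; and then a domain splitting at $x=st-\a+\b$, where for $x$ below the splitting point $U\gtrsim u_- w/4$ so the weight comes for free, while above it one multiplies $\psi_{xt}=\phi_{xx}$ by $w\psi_x$ and uses $-w_t\ge\frac{s^2w}{2D}$ there. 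A secondary omission: the boundary contributions of the form $\int_0^t|A(\tau)||\psi(0,\tau)|\,d\tau$ require the $L^1_t$ bound $\int_0^t|A(\tau)|\,d\tau\le C(e^{-s\b/D}+\delta)$ — this is precisely where the double-integral hypothesis $\int_0^t\int_\tau^\infty|\eta(z)-\eta_-|\,dz\,d\tau\le\delta$ in \eqref{around} enters — and it is not implied by the $L^2_t$ bound on $A$ that you list.
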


The local existence in Proposition \ref{local existence} can be proved using the standard iteration method (see \cite{Nishida78}), and the details will be omitted for brevity. Now it remains to derive the {\it a priori}  estimates in Proposition \ref{a priori estimate}.  Without loss of generality, we assume that $N(t)\ll1$, $|\a|\ll1$ and $\b\gg1$ in what follows.\\

We first derive the basic $L^2$-estimate.

\begin{lemma}\label{Basic L^2 estimate}Let the assumptions of Proposition \ref{a priori estimate} hold. Then there exists a constant $C>0$ such that
\begin{equation}\label{L^2 estimate}
\norm{\psi}^2+\norm{\phi}^2_w+\int_0^t\norm{\phi_x}_w^2\leq C\big(\|\psi_0\|^2+\|\phi_0\|^2_{w_0}+e^{-\frac{s\b}{D}}+\delta\big)+CN(t)\int_0^t\int_0^\infty\frac{\psi_x^2}{U}.
\end{equation}
\end{lemma}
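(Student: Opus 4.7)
The plan is to derive the weighted $L^2$ energy identity via a pair of multipliers that exploit two algebraic identities coming from the explicit form \eqref{explicit} of the wave and the choice \eqref{weight fucntion} of the weight, namely $U(z)w(z,t)=u_-$ and $V(z)w(z,t)=v_-$. First I would multiply the $\phi$-equation in \eqref{nonlinear system} by $\phi w$ and integrate over $\mathbb{R}_+$, and separately multiply the $\psi$-equation by $\chi u_-\psi$ and integrate. Summing yields $\tfrac{1}{2}\tfrac{d}{dt}\int(\phi^2 w+\chi u_-\psi^2)\,dx$ on the left. The cross coupling $\chi\int U\psi_x\phi w\,dx$ collapses, because of $Uw=u_-$, into $\chi u_-\int\psi_x\phi\,dx$, which combines with $\chi u_-\int\psi\phi_x\,dx$ coming from the second equation into the total derivative $\chi u_-\int(\psi\phi)_x\,dx$ and integrates to the single boundary contribution $-\chi u_-\psi(0,t)\phi(0,t)$.

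Integrating by parts in $D\int\phi_{xx}\phi w\,dx$ generates the dissipation $-D\int\phi_x^2w\,dx$ together with the remainder $-D\int\phi\phi_x w_x\,dx$. Using the identity $w_x=\tfrac{s}{D}(w-1)$ and one more integration by parts, the latter produces $\tfrac{s}{2}(w(0,t)-1)\phi(0,t)^2+\tfrac{s^2}{2D}\int\phi^2(w-1)\,dx$, and the interior part is exactly cancelled by $\tfrac{1}{2}\int\phi^2 w_t\,dx=-\tfrac{s^2}{2D}\int\phi^2(w-1)\,dx$, because $w_t+sw_x=0$. For the transport term, $Vw=v_-$ reduces $\chi\int V\phi_x\phi w\,dx$ to $\chi v_-\int\phi\phi_x\,dx=-\tfrac{\chi v_-}{2}\phi(0,t)^2$. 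Invoking the algebraic relation $\chi|v_-|=\chi\sqrt{u_-/\chi}=\sqrt{\chi u_-}=s$ from \eqref{1-6}, this combines with the $\tfrac{s}{2}(w(0,t)-1)\phi(0,t)^2$ above to produce the clean positive boundary expression $\tfrac{s}{2}w(0,t)\phi(0,t)^2$.

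The remaining linear boundary contributions $-D\phi_x(0,t)\phi(0,t)w(0,t)$ and $-\chi u_-\psi(0,t)\phi(0,t)$ are controlled using the Sobolev embedding \eqref{Sobolev}, which gives $|\phi_x(0,t)|+|\psi(0,t)|\leq N(t)$, combined with the pointwise bound $|\phi(0,t)|=|A(t)|\leq C\,e^{\frac{s}{D}(-st+\alpha-\beta)}+\int_t^\infty|\eta(\tau)-\eta_-|\,d\tau$ obtained by directly estimating \eqref{3.6} using the explicit form of $U$. A straightforward computation together with \eqref{around} yields $\int_0^t|A(\tau)|\,d\tau+\int_0^tA(\tau)^2\,d\tau\leq C(e^{-s\beta/D}+\delta)$, which produces precisely the error term in \eqref{L^2 estimate}. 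Finally, the cubic interior term $\chi\int\phi_x\psi_x\phi w\,dx$ is handled by bounding $|\phi|\leq N(t)$ via \eqref{Sobolev}, splitting via $2|\phi_x\psi_x|w\leq\phi_x^2w+\psi_x^2w$, absorbing the $\phi_x^2w$ piece into the dissipation when $N(t)$ is small, and rewriting $\psi_x^2w=u_-\psi_x^2/U$ (again from $Uw=u_-$) to match the nonlinear contribution on the right-hand side of \eqref{L^2 estimate}.

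The main obstacle is bookkeeping: one must verify that every boundary contribution at $x=0$ ends with a sign compatible with the dissipative structure, and that the smallness of both $|A(t)|$ (which carries the shift/exponential decay $e^{-s\beta/D}$ plus the perturbation $\delta$) and $N(t)$ is enough to absorb the non-positive boundary cross terms and the cubic remainder without spoiling the leading estimate. The identities $Uw=u_-$, $Vw=v_-$, and $\chi|v_-|=s$, all immediate consequences of \eqref{explicit} and \eqref{1-6}, are the structural inputs that make this estimate close at the basic $L^2$ level; without them the $\psi_x\phi$ coupling and the signs of the quadratic boundary terms in $\phi(0,t)$ would not be controllable without appealing to higher-order norms.
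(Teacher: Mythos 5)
Your proof is correct and is essentially the paper's own argument: since $w=u_-/U$ exactly, your multiplier pair $(\phi w,\ \chi u_-\psi)$ is just $u_-$ times the paper's $(\phi/U,\ \chi\psi)$, and the cancellation of the $\psi_x\phi$ coupling, the treatment of the cubic term via $N(t)$, and the boundary control through $|\phi(0,t)|=|A(t)|$ with \eqref{3.14}--type bounds all match. The only cosmetic differences are that you keep the full dissipation by integrating the $w_x$ remainder exactly (where the paper spends half of it via Young's inequality and drops a sign-definite term), and you bound the quadratic boundary term by $\int_0^tA^2$ rather than by $N(t)\int_0^t|A|$; both variants close in the same way.
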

\begin{proof}
Multiplying $\eqref{nonlinear system}_1$ by $\phi/U$ and $\eqref{nonlinear system}_2$ by $\chi\psi$,  and adding them, we obtain
\[\frac{1}{2}\left(\frac{\phi^2}{U}+\chi\psi^2\right)_t+\frac{D\phi_x^2}{U}+\frac{\phi^2}{2}\left(\frac{s+\chi V}{U}\right)_x=\left(\frac{D\phi\phi_x}{U}+\frac{\k V\phi^2}{2U}+\k\phi\psi\right)_x+\frac{DU_x\phi\phi_x}{U^2}+\chi\frac{\phi\phi_x\psi_x}{U}.\]
From Young's inequality: $\frac{DU_x\phi\phi_x}{U^2}\leq\frac{D\phi_x^2}{2U}+\frac{DU_x^2\phi^2}{2U^3}$,
it follows that
\be\begin{split}\label{3.11}
&\frac{1}{2}\left(\frac{\phi^2}{U}+\chi\psi^2\right)_t+\frac{D\phi_x^2}{2U}+\frac{\phi^2}{2}\left[\left(\frac{s+\chi V}{U}\right)_x-\frac{DU_x^2}{2U^3}\right]\\&\leq\left(\frac{D\phi\phi_x}{U}+\frac{\k V\phi^2}{2U}+\k\phi\psi\right)_x+\chi\frac{\phi\phi_x\psi_x}{U}.\end{split}\ee
By \eqref{3-4}, it is easy to see that
$$
\left(\frac{s+\chi V}{U}\right)_x-\frac{DU_x^2}{2U^3}=-\frac{\k}{s}\cdot\frac{U_x}{U}>0.
$$
Thus, integrating \eqref{3.11} over $[0,\infty)\times[0,t]$, we derive
\begin{equation}\label{3.13}
\begin{split}
&\frac{1}{2}\ii\left(\frac{\phi^2}{U}+\chi\psi^2\right)+\frac{D}{2}\int_0^t\ii\frac{\phi_x^2}{U}\\&\leq\frac{1}{2}\ii\left(\frac{\phi_0^2}{U}+\chi\psi_0^2\right)-\int_0^t\left(\frac{D\phi\phi_x}{U}+\frac{\k V\phi^2}{2U}+\k\phi\psi\right)(0,\tau)d\tau+\chi\int_0^t\ii\frac{\phi_x\psi_x\phi}{U}.
\end{split}\end{equation}
By \eqref{Sobolev} and Young's inequality, the last term in \eqref{3.13} is estimated as
\begin{equation}\label{3-3}
\chi\int_0^t\ii\frac{\phi_x\psi_x\phi}{U}\leq\frac{N(t)D}{4}\int_0^t\ii\frac{\phi_x^2}{U}
+\frac{N(t)\k^2}{D}\int_0^t\ii\frac{\psi_x^2}{U}.\end{equation}
The boundary term in \eqref{3.13} can be estimated as follows. 
With the fact $\ln(1+x)\leq x$ for all $x\geq 0$, one has
$$s\int_t^\infty(u_--U(-s\tau+\a-\b))d\tau=\frac{Du_-}{s}\ln(1+ e^{\frac{s}{D}(-st+\a-\b)})\leq\frac{Du_- }{s} e^{\frac{s}{D}(-st+\a-\b)}.$$
Thus, by \eqref{3.6} and \eqref{around}, we have
\be\label{3.14}\int_0^t|\phi(0,\tau)|\leq C\int_0^te^{-\frac{s}{D} (s\tau+\b)}d\tau+\int_0^t\int_\tau^\infty|\eta(z)-\eta_-|dzd\tau\leq C(e^{-\frac{s\b}{D}}+\delta).
\ee
In addition, by Lemma \ref{etw}, it holds that $U(-st+\a-\b)>U(0)=\frac{u_-}{2}$ and $|V(-st+\a-\b)|\leq |v_-|$. Hence
\begin{equation}\label{3-5}
\left|\int_0^t\left(\frac{D\phi\phi_x}{U}+\frac{\k V\phi^2}{2U}+\k\phi\psi\right)(0,\tau)d\tau\right|\leq CN(t)\int_0^t|\phi(0,\tau)|d\tau\leq C(e^{-\frac{s}{D} \b}+\delta),
\end{equation}
where we have used $\|\phi(\cdot,t)\|_{L^\infty},\|\phi_x(\cdot,t)\|_{L^\infty},\|\psi(\cdot,t)\|_{L^\infty}\leq N(t)$, see \eqref{Sobolev}. Substituting \eqref{3-3} and \eqref{3-5} into \eqref{3.13}, and noting the fact that
\be\label{3.15}
C_1w\leq\frac{1}{U}\leq C_2w ,\ee
one gets  $\eqref{L^2 estimate}$ immediately and the proof is completed.
\end{proof}

We next present the estimate of the first order derivatives of $(\phi,\psi)$.
\begin{lemma}\label{H^1 estimate}Let the assumptions of Proposition \ref{a priori estimate} hold. Then
\begin{equation}\label{eqn-2.25}
\begin{split}
&\|\psi\|_1^2+\|\phi\|^2_{1,w}
+\|\psi_x\|_w^2+\int_0^t\left(\|\phi_x\|^2_{1,w}
+\|\psi_x\|_w^2\right)
\leq C\left(\|\psi_{0x}\|_{w_0}^2+\|\phi_0\|^2_{1,w_0}
+\|\psi_0\|_1^2+\delta+e^{-\frac{s\b}{D}}\right)
\end{split}\end{equation}
holds for some constant $C>0$.
\end{lemma}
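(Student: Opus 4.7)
The plan is to upgrade the basic $L^2$ estimate of Lemma~\ref{Basic L^2 estimate} to an $H^1$-weighted energy estimate by combining two complementary multiplications. The main step is to differentiate the first equation of $\eqref{nonlinear system}$ in $x$ to obtain
\begin{equation*}
\phi_{xt} = D\phi_{xxx} + \chi(V\phi_x)_x + \chi(U\psi_x)_x + \chi(\phi_x\psi_x)_x,
\end{equation*}
multiply by $\phi_x/U$, and integrate over $[0,\infty)$. The crucial algebraic fact is that $1/U = w/u_-$, so $wU \equiv u_-$ is constant; consequently the cross term $\chi U\psi_{xx}\cdot\phi_x/U$ reduces to the unweighted $\chi\phi_x\psi_{xx}$, which after integration by parts in $x$ and invoking $\psi_{xt}=\phi_{xx}$ produces $-\tfrac{\chi}{2}\tfrac{d}{dt}\|\psi_x\|^2$ plus a boundary contribution. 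Moving this time derivative to the left-hand side supplies control of $\|\psi_x\|^2$ (unweighted) alongside the weighted $\phi_x^2$ control. The principal term $D\phi_{xxx}\cdot\phi_x/U$ yields $D\|\phi_{xx}\|_w^2$ dissipation after two integrations by parts, while the lower-order quadratic remainders coming from $\chi V_x\phi_x^2/U$, $\chi V\phi_x\phi_{xx}/U$, $(1/U)_{xx}\phi_x^2$, and $(1/U)_t\phi_x^2$ assemble into a coercive form via precisely the algebraic identity $((s+\chi V)/U)_x - DU_x^2/(2U^3) = -\chi U_x/(sU) > 0$ employed in Lemma~\ref{Basic L^2 estimate}. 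The cross term $\chi\int (U_x/U)\phi_x\psi_x\,dx$ is bounded by Young's inequality (since $U_x/U$ is uniformly bounded), contributing only small multiples of $\|\phi_x\|^2+\|\psi_x\|^2$, and the genuine nonlinear term $\chi\int(\phi_x\psi_x)_x\phi_x/U\,dx$ is absorbed using $N(t)\ll 1$ and Sobolev embedding.

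The second ingredient is a weighted $\psi_x$-estimate obtained by direct computation of $\tfrac{d}{dt}\|\psi_x\|_w^2$. Using $\psi_{xt}=\phi_{xx}$ and the explicit identity $w_t = -(s^2/D)(w-1)\le 0$ yields
\begin{equation*}
\frac{d}{dt}\|\psi_x\|_w^2 + \frac{s^2}{D}\|\psi_x\|_w^2 = \frac{s^2}{D}\|\psi_x\|^2 + 2\int_0^\infty w\,\psi_x\,\phi_{xx}\,dx.
\end{equation*}
Applying Cauchy--Schwarz to the last integral and absorbing $(s^2/(2D))\|\psi_x\|_w^2$ into the LHS reduces the right-hand side to $(s^2/D)\|\psi_x\|^2 + (2D/s^2)\|\phi_{xx}\|_w^2$. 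Integrating in $t$ and substituting the pointwise bound on $\|\psi_x\|^2$ together with the time integral of $\|\phi_{xx}\|_w^2$ from the main step yields both $\|\psi_x(\cdot,t)\|_w^2$ pointwise and the weighted dissipation $\int_0^t\|\psi_x\|_w^2\,d\tau$.

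Boundary terms at $x=0$ deserve special care. Contributions involving $\phi(0,t)$ are controlled through the explicit formula $\eqref{3.6}$ and the time-integrated estimate $\eqref{3.14}$. Those involving $\phi_{xx}(0,t)$ are extracted by rewriting the non-zero flux condition $\eqref{new-boundary data}$ together with $\eqref{3-4}$ in the form
\begin{equation*}
D\phi_{xx}(0,t) = (\eta(t)-\eta_-) - s\bigl(u_- - U(-st+\a-\b)\bigr) - \chi U(-st+\a-\b)\psi_x(0,t) - \chi V(-st+\a-\b)\phi_x(0,t) - \chi\phi_x\psi_x\bigr|_{x=0},
\end{equation*}
which is small thanks to $\eqref{around}$, the exponential decay $u_- - U(-st+\a-\b) = O(e^{-s\b/D}e^{-s^2 t/D})$ for large $t$, and the Sobolev bounds $|\phi_x(0,t)|,|\psi_x(0,t)|\le N(t)$. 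Combining Lemma~\ref{Basic L^2 estimate} with the two new estimates and using the smallness of $N(t)+\delta+\b^{-1}$ absorbs the residual $CN(t)\int_0^t\int_0^\infty\psi_x^2/U\,dx d\tau$ in $\eqref{L^2 estimate}$ (which is comparable to $CN(t)\int_0^t\|\psi_x\|_w^2$) into the freshly obtained weighted $\psi_x$-dissipation, producing $\eqref{eqn-2.25}$.

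The main obstacle will be the delicate coupling between the unweighted $\|\psi_x\|^2$ and weighted $\|\psi_x\|_w^2$ norms in the Gr\"onwall-type inequality of the second ingredient: the naive bound $\|\psi_x\|^2\le\|\psi_x\|_w^2$ would destroy the dissipation, so one must carefully retain the independent pointwise control of $\|\psi_x\|^2$ from the main step, propagate it, and verify that the resulting coupled system of estimates closes under the smallness assumptions. A secondary technical challenge is that, unlike in Dirichlet-type half-space analyses, the non-zero flux boundary condition $\eqref{new-boundary data}$ does not directly prescribe $u(0,t)$ or $\phi_{xx}(0,t)$; the usable boundary representations must instead be extracted via the equations themselves, a step which generates the $\delta+e^{-s\b/D}$ contributions on the right-hand side of $\eqref{eqn-2.25}$.
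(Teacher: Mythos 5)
Your proposal has a genuine gap, and it is exactly the one you flag at the end: the coupled system of estimates does not close, because nowhere in your scheme do you generate a dissipation term for $\psi_x$ in the region where the weight $w$ is of order one (equivalently where $U$ is bounded below). Your main step only yields \emph{pointwise-in-time} control of $\|\psi_x\|^2$ (via the exact derivative $-\tfrac{\chi}{2}\tfrac{d}{dt}\|\psi_x\|^2$), which gives $\int_0^t\|\psi_x\|^2\,d\tau\le Mt$ and is useless for a time-uniform bound. Your second ingredient produces, after Cauchy--Schwarz, at most $(1-\theta)\tfrac{s^2}{D}\int_0^t\|\psi_x\|_w^2$ of dissipation on the left against the source $\tfrac{s^2}{D}\int_0^t\|\psi_x\|^2$ on the right, and since $\|\psi_x\|^2\le\|\psi_x\|_w^2$ with the two norms actually comparable on $\{x<st-\a+\b\}$ (where $1<w<2$ but $w_t=-(s^2/D)(w-1)$ is \emph{not} bounded below by a multiple of $w$), the source always dominates no matter how you tune $\theta$. ``Retaining the pointwise control of $\|\psi_x\|^2$'' cannot repair this. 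The missing idea is the paper's elliptic estimate: rewrite the first equation of \eqref{nonlinear system} as $D\phi_{xx}+\chi U\psi_x=\phi_t-\chi V\phi_x-\chi\phi_x\psi_x$, square it and divide by $U$; the cross term $2D\chi\phi_{xx}\psi_x=D\chi(\psi_x^2)_t$ is an exact time derivative, and the diagonal term $\chi^2U\psi_x^2$ supplies precisely the dissipation $\int_0^t\ii U\psi_x^2$ that controls $\int_0^t\int_0^{s\tau-\a+\b}w\psi_x^2$ (using $U>\tfrac{u_-}{4}w$ there), while the transport identity with $-w_t\ge\tfrac{s^2w}{2D}$ handles only the complementary region $x>s\tau-\a+\b$. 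This elliptic step in turn requires $\int_0^t\ii\phi_t^2/U$ on hand, which the paper obtains by multiplying the \emph{undifferentiated} equation by $\phi_t/U$ rather than differentiating and multiplying by $\phi_x/U$ as you do.

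A secondary problem with your main step: integrating $\chi\int_0^\infty\psi_{xx}\phi_x\,dx$ by parts produces the boundary contribution $-\chi\psi_x\phi_x(0,t)$, whose time integral can only be estimated via trace inequalities of the form $\psi_x^2(0,\tau)\le\epsilon\|\psi_{xx}\|^2+C_\epsilon\|\psi_x\|^2$; this drags the second-order quantity $\int_0^t\|\psi_{xx}\|^2$ (not yet available at the $H^1$ stage) and again the uncontrolled $\int_0^t\|\psi_x\|^2$ into the estimate. The paper avoids both difficulties by the choice of multipliers above, whose boundary terms involve only $\phi(0,\tau)$ and $\phi_t(0,\tau)=A'(\tau)$, both explicitly computable from \eqref{3.6} and small by \eqref{around}.
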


\begin{proof} The proof is divided into three steps.

{\bf Step 1}. Weighted energy estimate. Multiplying $\eqref{nonlinear system}_1$ by $\phi_t/U$, noting that
\[\begin{split}\frac{D\phi_t\phi_{xx}}{U}&=\left(\frac{D\phi_t\phi_{x}}{U}\right)_x-\left(\frac{D\phi_t}{U}\right)_x\phi_x\\
&=\left(\frac{D\phi_t\phi_{x}}{U}\right)_x-\left(\frac{D\phi_{x}^2}{2U}\right)_t+\frac{DsU_x}{2U^2}\phi_x^2+\frac{DU_x}{U^2}\phi_t\phi_x,\\
\chi\phi_{t}\psi_x&=\chi(\phi\psi_x)_t-\chi\phi\psi_{xt}=\chi(\phi\psi_x)_t-\chi\phi\phi_{xx}\\&=\chi(\phi\psi_x)_t-\chi(\phi\phi_x)_x+\chi\phi_x^2,\end{split}\]
where we have used the equation
\be\label{2}\psi_{tx}=\phi_{xx},\ee
we then obtain
\[\begin{split}\label{3.16}
\left(\frac{D\phi_{x}^2}{2U}\right)_t+\frac{\phi_{t}^2}{U}-\frac{DsU_x}{2U^2}\phi_x^2
=&\left(\frac{D\phi_t\phi_x}{U}-\chi\phi\phi_x\right)_x+\chi(\phi\psi_x)_t+\chi\phi_x^2+\frac{DU_x}{U^2}\phi_t\phi_x\\
&+\frac{\chi V}{U}\phi_t\phi_x+\frac{\chi \phi_x\psi_x\phi_t}{U}.
\end{split}\]
Integrating this equation over $[0,+\infty)\times[0,t]$ along with Young's inequality which gives
\[\frac{\chi V}{U}\phi_t\phi_x\leq\frac{\phi_{t}^2}{4U}+\frac{\chi^2V^2\phi_x^2}{U},\ \frac{DU_x}{U^2}\phi_t\phi_x\leq\frac{\phi_{t}^2}{4U}+\frac{D^2U_x^2\phi_x^2}{U^3},\]
and noting that $U_x<0$, we have
\begin{equation}\label{3.18}
\begin{split}
&\frac{D}{2}\ii\frac{\phi_x^2}{U}+\frac{1}{2}\int_0^t\ii\frac{\phi_t^2}{U}+\frac{Ds}{2}\int_0^t\ii\frac{|U_x|\phi_x^2}{U}\\&
\leq\frac{D}{2}\ii\frac{\phi_{0x}^2}{U}+\int_0^t\left(\chi\phi-\frac{D\phi_t}{U}\right)\phi_x(0,\tau)d\tau+\chi\ii\phi\psi_x-\chi\ii\phi_0\psi_{0x}\\&\quad
+\int_0^t\ii\left(\chi+\frac{\chi^2V^2}{U}+\frac{D^2U_x^2}{U^3}\right)\phi_x^2+\chi\int_0^t\ii\frac{\phi_x\psi_x\phi_t}{U}.
\end{split}\end{equation}
By Young's inequality, one has
\begin{equation}\label{3.22}
\begin{split}
&\chi\ii\phi\psi_x\leq\frac{\epsilon}{4}\ii\psi_x^2+\frac{\chi^2}{\epsilon}\ii\phi^2,\\
&\chi\int_0^t\ii\frac{\phi_x\psi_x\phi_t}{U}\leq N(t)\int_0^t\ii\frac{\phi_t^2}{4U}+\chi^2N(t)\int_0^t\ii\frac{\psi_x^2}{U},
\end{split}\end{equation}
where $\epsilon>0$ is a small constant to be determined later, and we have used  $\|\phi_x(\cdot,t)\|_{L^\infty}\leq N(t)$.
To estimated the boundary term, noting that by \eqref{3.6},
\begin{equation}\label{At}
A'(t)=s(u_--U(-st+\a-\b))-(\eta(t)-\eta_-),\end{equation}
we have
\[\begin{split}
|\phi_t(0,t)|=|A'(t)|\leq\frac{s u_- e^{\frac{s}{D}(-st+\a-\b)}}{1+ e^{\frac{s}{D}(-st+\a-\b)}}+|\eta(t)-\eta_-|.\end{split}\]
It then follows from \eqref{3.14} and \eqref{around} that
\be\label{3.21}\begin{split}
\left|\int_0^t\left(\chi\phi-\frac{D\phi_t}{U}\right)\phi_x(0,\tau)d\tau\right|\leq CN(t)\int_0^t\left(|\phi|+|\phi_t|\right)(0,\tau)d\tau\leq C(e^{-\frac{s}{D} \b}+\delta).\end{split}\ee
Thus, substituting \eqref{3.22} and \eqref{3.21} into \eqref{3.18}, by Lemma \ref{Basic L^2 estimate}, and observing that
\begin{equation}\label{3-6}
\frac{|U_x|}{U}=\frac{(s+\k V)}{D}\leq\frac{s}{D},
\end{equation}
we get
\begin{equation}\label{3.20}
\begin{split}
&\ii\frac{\phi_x^2}{U}+\int_0^t\ii\frac{\phi_t^2}{U}\\
&\leq C\left(\|\psi_0\|_1^2+\|\phi_0\|^2_{1,w_0}+e^{-\frac{s}{D} \b}+\delta\right)+\epsilon\ii\psi_x^2+CN(t)\int_0^t\ii\frac{\psi_x^2}{U}.
\end{split}\end{equation}

{\bf Step 2}. Elliptic estimate.
The first equation of $\eqref{nonlinear system}$ gives
\begin{equation*}\label{eqn-2.18}
D\phi_{xx}+\chi U\psi_x=\phi_{t}-\chi V\phi_x-\chi\phi_x\psi_x.
\end{equation*}
Taking square and dividing the above equation by $U$ leads to
\begin{equation}\label{3.23}
\frac{D^2\phi_{xx}^2}{U}+\chi^2 U\psi_x^2+2D\chi\phi_{xx}\psi_x\leq2\left(\frac{\phi_t^2}{U}+\frac{\chi^2 V^2\phi_x^2}{U}+\frac{\chi^2 \phi_x^2\psi_x^2}{U}\right).
\end{equation}
Owing to \eqref{2}, $2D\chi\phi_{xx}\psi_x=D\k(\psi_x)_t$. Then integrating \eqref{3.23} over $[0,\infty)\times[0,t]$, by \eqref{3.20} and Lemma \ref{Basic L^2 estimate},
we obtain
\begin{equation*}
\begin{split}
&D^2\int_0^t\ii\frac{\phi_{xx}^2}{U}+\chi^2\int_0^t\ii U\psi_x^2+ D\chi\ii\psi_x^2\\
&\leq C\left(\|\psi_0\|_1^2+\|\phi_0\|^2_{1,w_0}+e^{-\frac{s}{D} \b}+\delta\right)+2\epsilon\ii\psi_x^2+CN(t)\int_0^t\ii\frac{\psi_x^2}{U},
\end{split}
\end{equation*}
where we have used $\norm{\phi_x(\cdot,t)}_{L^\infty}\leq N(t)$. Now choosing $\epsilon=D\chi/4$, we get
\begin{equation}\label{eqn-5.19}
\begin{split}
&\ii\psi_x^2+\int_0^t\ii U\psi_x^2+\int_0^t\ii\frac{\phi_{xx}^2}{U}\\
&\leq C\left(\|\psi_0\|_1^2+\|\phi_0\|^2_{1,w_0}+e^{-\frac{s}{D} \b}+\delta\right)+CN(t)\int_0^t\ii\frac{\psi_x^2}{U}.
\end{split}
\end{equation}It further follows from \eqref{3.20} that
\begin{equation}\label{3.24}
\begin{split}
\ii\frac{\phi_x^2}{U}+\int_0^t\ii\frac{\phi_t^2}{U}\leq C\left(\|\psi_0\|_1^2+\|\phi_0\|^2_{1,w_0}+e^{-\frac{s}{D} \b}+\delta\right)+CN(t)\int_0^t\ii\frac{\psi_x^2}{U}.
\end{split}\end{equation}

{\bf Step 3}. To complete the proof of $\eqref{eqn-2.25}$, it remains to estimate $\int_0^t\ii\frac{\psi_x^2}{U}$. By \eqref{3.15}, it suffices to estimate $\int_0^t\ii w\psi_x^2$. Because $U$ is monotone decreasing in $(-\infty, \infty)$, it holds that $\frac{u_-}{2}=U(0)<U(z)<u_-$ for all $z \in (-\infty, 0)$. In addition, $1<w(x,t)<2$ for all $x \in (0,st-\a+\b)$. Thus,  $U(x-st+\a-\b) > \frac{u_-}{4}w(x,t)$ for all $x \in(0,st-\a+\b)$. Then it follows from \eqref{eqn-5.19}  that
\begin{equation}\label{eqn-5.21}
\begin{split}
&\int_0^{st-\a+\b}w\psi_x^2+\int_0^t\int_0^{s\tau-\a+\b} w \psi_x^2\\&\leq C\left(\|\psi_0\|_1^2+\|\phi_0\|^2_{1,w_0}
+e^{-\frac{s}{D} \b}+\delta+N(t)\int_0^t\int\frac{\psi_x^2}{U}\right).
\end{split}
\end{equation}
We then multiply  \eqref{2} by $w\psi_x$ to obtain
\begin{equation*}
\psi_{xt}w\psi_x=w\psi_x\phi_{xx},
\end{equation*}
which leads to
\begin{equation}\label{eqn-2.21}
\left(\frac{w\psi_x^2}{2}\right)_t-\frac{w_t\psi_x^2}{2}=w\psi_x\phi_{xx}.
\end{equation}
Now integrating \eqref{eqn-2.21} over $[st-\a+\b,+\infty)\times[0,t]$, and using the fact that
\begin{equation}\label{wt}
-w_t=\frac{s^2}{D}e^{\frac{s}{D}(x-st+\a-\b)}\geq\frac{s^2w}{2D}\ \text{ for } x \in(st-\a+\b,+\infty),
\end{equation} we have
\begin{equation*}
\begin{split}
&\frac{1}{2}\int_{st-\a+\b}^{\infty} w\psi_x^2+\frac{s^2}{4D}\int_0^t\int_{s\tau-\a+\b}^{\infty}w\psi_x^2\\&\leq\frac{1}{2}\ii w_0\psi_{0x}^2+\int_0^t\int_{s\tau-\a+\b}^{\infty}w\psi_x\phi_{xx}\\
&\leq \frac{1}{2}\|\psi_{0x}\|_{w_0}^2+\frac{s^2}{8D}\int_0^t\int_{s\tau-\a+\b}^{\infty}w\psi_x^2
+\frac{2D}{s^2}\int_0^t\int_{s\tau-\a+\b}^{\infty}w\phi_{xx}^2,
\end{split}
\end{equation*}
which in combination with \eqref{3.15}, \eqref{eqn-5.19} and \eqref{eqn-5.21} gives
\begin{equation*}
\ii w\psi_x^2+\int_0^t\ii w\psi_x^2
\leq C\left(\|\psi_{0x}\|_{w_0}^2+\|\phi_0\|^2_{1,w_0}
+\|\psi_0\|_1^2+e^{-\frac{s}{D} \b}+\delta+N(t)\int_0^t\ii w\psi_x^2\right).
\end{equation*}
Thus, it holds that
\begin{equation}\label{eqn-2.24}
\ii w\psi_x^2+\int_0^t\ii w\psi_x^2
\leq C\left(\|\psi_{0x}\|_{w_0}^2+\|\phi_0\|^2_{1,w_0}
+\|\psi_0\|_1^2+e^{-\frac{s}{D} \b}+\delta\right),
\end{equation}
since $N(t)$ is small enough. The desired estimate $\eqref{eqn-2.25}$ follows from \eqref{L^2 estimate}, \eqref{eqn-5.19}, \eqref{3.24} and \eqref{eqn-2.24}.
\end{proof}

We now derive the estimates of the second order derivative of $(\phi,\psi)$.

\begin{lemma}\label{sec}
Let the assumptions of Proposition \ref{a priori estimate} hold. Then there exists a constant $C>0$ such that
\begin{equation}\label{eqn-2.31}
\begin{split}
&\|\phi_{xx}\|^2_{1,w}
+\|\psi_{xx}\|_w^2+\int_0^t\left(\|\phi_{xxx}\|^2_w
+\|\psi_{xx}\|_w^2\right)\\&
\leq C\left(\|\psi_{0x}\|_{1,w_0}^2+\|\phi_0\|^2_{2,w_0}
+\|\psi_0\|^2+e^{-\frac{s\b}{D}}+\delta\right).\end{split}
\end{equation}
\end{lemma}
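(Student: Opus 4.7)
The plan is to extend the three-step argument of Lemma \ref{H^1 estimate} by one derivative, working on the $x$-differentiated system for $(\phi_x,\psi_x)$. Differentiating $\eqref{nonlinear system}_1$ once in $x$ gives
\begin{equation*}
(\phi_x)_t = D(\phi_x)_{xx} + \chi V(\phi_x)_x + \chi U(\psi_x)_x + \chi V_x\phi_x + \chi U_x\psi_x + \chi(\phi_x\psi_x)_x,
\end{equation*}
which has the same structure as $\eqref{nonlinear system}_1$ together with two lower-order forcings $\chi V_x\phi_x$ and $\chi U_x\psi_x$. Since $U_x,V_x$ decay exponentially in $z$ and $(\phi_x,\psi_x)$ is already under control in $L^\infty_t L^2_w\cap L^\infty_{t,x}$ by Lemma \ref{H^1 estimate}, these two terms are harmless perturbations that can be absorbed with Young's inequality.

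\textbf{Step 1 (weighted energy).} I would multiply the displayed equation by $\phi_{xt}/U$ and integrate over $[0,\infty)\times[0,t]$. Mimicking Step 1 of Lemma \ref{H^1 estimate} produces, on the favorable side, $(D\phi_{xx}^2/(2U))_t$, $\phi_{xt}^2/U$, and the good-sign dissipation $-DsU_x\phi_{xx}^2/(2U^2)>0$. The principal coupling $\chi U\psi_{xx}\cdot \phi_{xt}/U=\chi\psi_{xx}\phi_{xt}$ is reorganized, using the differentiated compatibility $\psi_{xxt}=\phi_{xxx}$, as
\begin{equation*}
\chi\psi_{xx}\phi_{xt}=\chi(\psi_{xx}\phi_x)_t-\chi(\phi_x\phi_{xx})_x+\chi\phi_{xx}^2,
\end{equation*}
where the quadratic remainder $\chi\phi_{xx}^2$ is absorbed by $\int_0^t\|\phi_{xx}\|_w^2\,d\tau$ already bounded in Lemma \ref{H^1 estimate}. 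All cubic terms are handled by Young's inequality and the smallness of $N(t)$.

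\textbf{Steps 2 and 3 (elliptic estimate and weighted $\psi_{xx}$).} These repeat Steps 2--3 of Lemma \ref{H^1 estimate} at the next derivative level: differentiate $\eqref{nonlinear system}_1$ twice, square and divide by $U$, and use the cancellation $2D\chi\phi_{xxx}\psi_{xx}=D\chi(\psi_{xx}^2)_t$ to bound $\int_0^t\int \phi_{xxx}^2/U$, $\int_0^t\int U\psi_{xx}^2$ and $\int\psi_{xx}^2$. To upgrade from the $1/U$-weight to the $w$-weight, I would split $[0,\infty)$ at $x=st-\a+\b$: on $(0,st-\a+\b)$, use $U\geq\frac{u_-}{4}w$ exactly as in Step 3 of Lemma \ref{H^1 estimate}; on $(st-\a+\b,\infty)$, test $\psi_{xxt}=\phi_{xxx}$ against $w\psi_{xx}$ and exploit the favorable time-weight $-w_t\geq\frac{s^2}{2D}w$ from \eqref{wt}, closing the loop via $\int_0^t\int w\phi_{xxx}^2$ from Step 2.

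\textbf{Main obstacle.} The new trouble term is the boundary flux $(D\phi_{xx}\phi_{xt}/U)(0,\tau)$ arising from the integration by parts in $x$ in Step 1. To extract $\phi_{xx}(0,t)$, I would evaluate \eqref{new-boundary data} at $x=0$ under $u=U+\phi_x$, $v=V+\psi_x$ and subtract the traveling-wave identity \eqref{3-4} at $z=-st+\a-\b$ to get
\begin{equation*}
D\phi_{xx}(0,t)=(\eta(t)-\eta_-)-s\bigl(u_--U(-st+\a-\b)\bigr)-\chi\bigl(U\psi_x+V\phi_x+\phi_x\psi_x\bigr)(0,t),
\end{equation*}
which together with the Sobolev control $|\phi_x(0,t)|,|\psi_x(0,t)|\leq N(t)$ yields $|\phi_{xx}(0,t)|\leq C\bigl(|\eta(t)-\eta_-|+e^{-\frac{s}{D}(st+\b)}+N(t)\bigr)$. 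For $\phi_{xt}(0,t)$, I would differentiate this same identity in $t$ and invoke \eqref{around}, or alternatively split the boundary integral with Young and trade a small multiple of $\int_0^t \phi_{xt}^2(0,\tau)d\tau$ against a trace inequality combined with the dissipation from Step 1. The combined boundary contribution is $C(e^{-s\b/D}+\delta)$, which fits inside the right-hand side of \eqref{eqn-2.31}; closing this boundary estimate while keeping the extra forcings $\chi V_x\phi_x$, $\chi U_x\psi_x$ at the harmless level is the main technical hurdle, the rest being a careful but routine extension of Lemma \ref{H^1 estimate}.
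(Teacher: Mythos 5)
Your overall architecture is the paper's: an $x$-differentiated system, a weighted energy step producing $\phi_{xx}^2/U$ and a $\phi_{xxx}$- or $\phi_{xt}$-dissipation, an elliptic/cancellation step using $\psi_{xxt}=\phi_{xxx}$ to extract $\int_0^t\ii U\psi_{xx}^2$ and $\ii\psi_{xx}^2$, and the same splitting of $[0,\infty)$ at $x=st-\a+\b$ with \eqref{wt} to upgrade to the weight $w$. (The paper tests with $\phi_{xxx}/U$ rather than your $\phi_{xt}/U$, obtaining $\int_0^t\ii\phi_{xxx}^2/U$ directly and recovering the rest from the equation; your variant, which gets $\int_0^t\ii\phi_{xt}^2/U$ first and converts via the elliptic step, is an acceptable permutation. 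Your "differentiate twice" in Step 2 should read "once", as your own cancellation identity $2D\chi\phi_{xxx}\psi_{xx}=D\chi(\psi_{xx}^2)_t$ shows.) Your identity for $D\phi_{xx}(0,t)$ obtained from \eqref{new-boundary data} and \eqref{3-4} is correct and is exactly the substitution the paper makes.

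The gap is in how you then dispose of the factor $\phi_{xt}(0,\tau)$ in the boundary integral $\int_0^t\bigl(\chi\phi_x-\tfrac{D\phi_{xt}}{U}\bigr)\phi_{xx}(0,\tau)\,d\tau$. Neither of your two proposed devices works. Differentiating the boundary identity in $t$ produces $\phi_{xxt}(0,t)$, not $\phi_{xt}(0,t)$, so it gives no pointwise control of the quantity you need. The Young/trace trade fails for a structural reason: the trace inequality $f^2(0)\le \epsilon^{-1}\|f\|^2+\epsilon\|f_x\|^2$ applied to $f=\phi_{xt}$ requires $\|\phi_{xxt}\|^2$, and via the equation $\phi_{xxt}=D\phi_{xxxx}+\cdots$ this is a fourth-order quantity that is not propagated by the energy scheme; the interior dissipation $\int_0^t\ii\phi_{xt}^2/U$ from your Step 1 does not control the boundary trace of $\phi_{xt}$. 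The device that actually closes this term --- and is the crux of the paper's proof --- is to first substitute $D\phi_{xx}(0,\tau)=A'(\tau)-\chi(U\psi_x+V\phi_x+\phi_x\psi_x)(0,\tau)$ \emph{inside} the boundary integral and then integrate by parts in $\tau$ in each resulting piece, so that the time derivative never sits on an uncontrolled trace: $\phi_{xt}A'$ is integrated by parts against $A''$ (controlled by \eqref{around} and the exponential decay of $U_x$), $\phi_{xt}\psi_x$ is integrated by parts using $\psi_{xt}=\phi_{xx}$, and $\phi_{xt}V\phi_x=\tfrac{V}{2}(\phi_x^2)_t$ is an exact time derivative. What remains are spatial traces of $\phi_x,\psi_x,\phi_{xx}$ only, each absorbed by $f^2(0)\le\epsilon^{-1}\|f\|^2+\epsilon\|f_x\|^2$ together with the $\int_0^t\ii\phi_{xxx}^2/U$ dissipation and Lemma \ref{H^1 estimate}. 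Without this time integration by parts your boundary estimate does not close, so you should replace both of your suggested treatments of $\phi_{xt}(0,\tau)$ by this step.
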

\begin{proof}
 We differentiate \eqref{nonlinear system} with respect to $x$ to get
\begin{equation}\label{eqn-2.15}
\left\{\begin{array}{ll}
\phi_{xt}=D\phi_{xxx}+\chi U_x\psi_x+\chi U\psi_{xx}+\chi V_x\phi_x+\chi V\phi_{xx}+\chi(\phi_x\psi_x)_x,\\
\psi_{xt}=\phi_{xx}.
\end{array}\right.
\end{equation}
Multiplying the first equation of \eqref{eqn-2.15} by $\frac{\phi_{xxx}}{U}$, integrating the result over $[0,\infty)\times[0,t]$ and noting that
\[\begin{split}\frac{\phi_{xt}\phi_{xxx}}{U}&=\left(\frac{\phi_{xt}\phi_{xx}}{U}\right)_x-\frac{\phi_{xxt}\phi_{xx}}{U}-\phi_{xt}\phi_{xx}(\frac{1}{U})_x\\
&=\left(\frac{\phi_{xt}\phi_{xx}}{U}\right)_x-\left(\frac{\phi_{xx}^2}{2U}\right)_t+\frac{sU_x\phi_{xx}^2}{2U^2}
+\frac{U_x\phi_{xt}\phi_{xx}}{U^2},\end{split}\]
we have
\begin{equation}\begin{split}\label{3.36}
&\ii\frac{\phi_{xx}^2}{2U}+\int_0^t\ii \frac{D\phi_{xxx}^2}{U}-\int_0^t\ii \frac{sU_x}{2U^2}\phi_{xx}^2\\
&=-\chi\int_0^t\ii\frac{\phi_{xxx}}{U}(U_x\psi_x+U\psi_{xx}+V_x\phi_x+ V\phi_{xx}+(\phi_x\psi_x)_x)\\&\quad+\int_0^t\ii\frac{U_x\phi_{xt}\phi_{xx}}{U^2}-\int_0^t\frac{\phi_{xt}\phi_{xx}}{U}(0,\tau)d\tau+\ii\frac{\phi_{0xx}^2}{2U}.
\end{split}\end{equation}
By Young's inequality, we get
\[\begin{split}
\chi\int_0^t\ii\left|\phi_{xxx}\psi_{xx}\right|&\leq\frac{D}{4}\int_0^t\ii\frac{\phi_{xxx}^2}{U}
+\frac{\chi^2}{D}\int_0^t\ii U\psi_{xx}^2,\\
\chi\int_0^t\ii\left|\frac{\phi_{xxx}(\phi_x\psi_x)_x}{U}\right|&
\leq\chi\int_0^t\ii\left(\frac{\left|\phi_{xxx}\phi_{xx}\psi_x\right|}{U}
+\frac{\left|\phi_{xxx}\psi_{xx}\phi_x\right|}{U}\right)\\&\leq N(t)\int_0^t\ii\frac{\phi_{xxx}^2}{U}
+CN(t)\int_0^t\ii\left(\frac{\phi_{xx}^2}{U}+\frac{\psi_{xx}^2}{U}\right),\end{split}\]
where we have used $\norm{\psi_x(\cdot,t)}_{L^\infty}\leq N(t)$, $\norm{\phi_x(\cdot,t)}_{L^\infty}\leq N(t)$.
Thus, in view of \eqref{3-6}, the first term on the RHS of \eqref{3.36} satisfies
\begin{equation}\begin{split}\label{3.37}
&-\chi\int_0^t\ii\frac{\phi_{xxx}}{U}(U_x\psi_x+U\psi_{xx}+V_x\phi_x+ V\phi_{xx}+(\phi_x\psi_x)_x)\\&
\leq(\frac{D}{2}+N(t))\int_0^t\ii\frac{\phi_{xxx}^2}{U}+\frac{\chi^2}{D}\int_0^t\ii U\psi_{xx}^2+CN(t)\int_0^t\ii\frac{\psi_{xx}^2}{U}\\&\quad+C\int_0^t\ii (\psi_{x}^2+\phi_{x}^2+\frac{\phi_{xx}^2}{U}).
\end{split}\end{equation}
By $\eqref{eqn-2.15}_1$ and \eqref{3-6} again, the second term on the RHS of \eqref{3.36} can be estimated as
\begin{equation*}\begin{split}
&\int_0^t\ii\frac{U_x\phi_{xt}\phi_{xx}}{U^2}\\&=\int_0^t\ii\frac{U_x}{U^2}\cdot[D\phi_{xxx}+\chi U_x\psi_x+\chi U\psi_{xx}+\chi V_x\phi_x+\chi V\phi_{xx}+\chi(\phi_x\psi_x)_x]\phi_{xx}\\
&\leq\frac{D}{4}\int_0^t\ii\frac{\phi_{xxx}^2}{U}+\frac{\chi^2}{D}\int_0^t\ii U\psi_{xx}^2+CN(t)\int_0^t\ii\frac{\psi_{xx}^2}{U}\\&\quad+C\int_0^t\ii (\frac{\psi_{x}^2}{U}+\phi_{x}^2+\frac{\phi_{xx}^2}{U})
\end{split}\end{equation*}
where we have used the following inequality obtained by the Cauchy-Schwarz inequality:
$$\left|\frac{\chi U_x\phi_{xx}\psi_{xx}}{U}\right|\leq\frac{s\chi\left|\phi_{xx}\psi_{xx}\right|}{D}
\leq\frac{s^2\phi_{xx}^2}{DU}+\frac{\chi^2U\psi_{xx}^2}{D}.$$

We next estimate the boundary term in \eqref{3.36}. In view of $\eqref{nonlinear system}_1$ and \eqref{3.6},
\[D\phi_{xx}=A'(t)-\chi(U\psi_x+V\phi_x+\phi_x\psi_x) \ \text{ at } x=0.\]
Thus, it follows that
\begin{equation}\label{3.39n}
\begin{split}
-\int_0^t\frac{\phi_{xt}\phi_{xx}}{U}(0,\tau)d\tau=&-\frac{1}{D}\int_0^t\frac{A'(\tau)\phi_{xt}}{U}(0,\tau)d\tau
+\frac{\chi}{D}\int_0^t\phi_{xt}\psi_x(0,\tau)d\tau\\&-\frac{s\chi}{2D}\int_0^t(\phi_{x}^2)_t(0,\tau)d\tau+\frac{\chi}{2D}\int_0^t\frac{(\phi_{x}^2)_t\psi_x}{U}(0,\tau)d\tau.
\end{split}
\end{equation}
Next we estimate the terms on the RHS of \eqref{3.39n}. First by the integration by parts, the first term on the RHS of \eqref{3.39n} equals to
\begin{eqnarray*}\begin{aligned}
&-\frac{1}{D}\int_0^t\frac{A'(\tau)\phi_{xt}}{U}(0,\tau)d\tau\\
&=-\frac{A'(t)\phi_{x}}{DU(-st+\a-\b)}+\frac{A'(0)\phi_{0x}}{DU(\a-\b)} +\frac{1}{D}\int_0^t\phi_{x}\left(\frac{A''(\tau)}{U(-s\tau+\a-\b)}+\frac{sA'(\tau)U_x}{U^2(-s\tau+\a-\b)}\right).
\end{aligned}\end{eqnarray*}
With \eqref{At}, $A''(t)=s^2U_x(-st+\a-\b)-\eta'(t)$, and the fact $U(-st+\a-\b)>U(0)=\frac{u_-}{2}$, we have that
\begin{equation*}\begin{split}
-\frac{1}{D}\int_0^t\frac{A'(\tau)\phi_{xt}}{U}(0,\tau)d\tau&\leq CN(t)\left(|\eta(t)-\eta_-|+|\eta(0)-\eta_-|+e^{-\frac{s}{D}\b}\right)\\
&\quad+CN(t)\int_0^t(|\eta'(\tau)|+|\eta(\tau)-\eta_-|+e^{\frac{s}{D}(-s\tau+\a-\b)})d\tau\\&\leq C(\delta+e^{-\frac{s}{D}\b})
\end{split}\end{equation*}
By \eqref{2}, the second term on the RHS of \eqref{3.39n} equals to
\begin{equation*}\begin{split}
\frac{\chi}{D}\int_0^t\phi_{xt}\psi_x(0,\tau)d\tau=\frac{\chi}{D}\phi_{x}\psi_x(0,t)-\frac{\chi}{D}\phi_{x}\psi_x(0,0)-\frac{\chi}{D}\int_0^t\phi_{x}\phi_{xx}(0,\tau)d\tau.
\end{split}\end{equation*}
Notice that for a function $f\in H^1(0,\infty)$, it holds that
\[f^2(0)=-\int_0^\infty(f^2(x))_xdx=-2\int_0^\infty ff_xdx\leq\frac{1}{\epsilon}\int_0^\infty f^2dx+\epsilon\int_0^\infty f_x^2dx.\]
Thus, by Lemma \ref{H^1 estimate}, the following holds:
\begin{equation*}\begin{split}
\frac{\chi}{D}\int_0^t\phi_{xt}\psi_x(0,\tau)d\tau&\leq\frac{\chi}{2D}\left[(\phi_x^2+\psi_x^2)(0,t)+(\phi_x^2+\psi_x^2)(0,0)
+\int_0^t(\phi_x^2(0,\tau)+\phi_{xx}^2(0,\tau))\right]\\
&\leq\epsilon\int_0^\infty [(\phi_{xx}^2+\psi_{xx}^2)+(\phi_{0xx}^2+\psi_{0xx}^2)]dx+\epsilon\int_0^t\int_0^\infty (\phi_{xx}^2+\phi_{xxx}^2)dxd\tau\\
&\quad+\frac{C}{\epsilon}\int_0^\infty [(\phi_{x}^2+\psi_{x}^2)+(\phi_{0x}^2+\psi_{0x}^2)]dx+\frac{C}{\epsilon}\int_0^t\int_0^\infty (\phi_{x}^2+\phi_{xx}^2)dxd\tau\\
&\leq\epsilon\int_0^\infty(\phi_{xx}^2+\psi_{xx}^2)dx+\epsilon\int_0^t\int_0^\infty\phi_{xxx}^2dxd\tau\\
&\quad+C\left(\|\psi_{0x}\|_{w_0}^2+\|\phi_0\|^2_{1,w_0}
+\|\psi_0\|_2^2+e^{-\frac{s\b}{D}}+\delta\right).
\end{split}\end{equation*}
Similarly, the last two terms of \eqref{3.39n} satisfy
\begin{equation*}\begin{split}
-\frac{s\chi}{2D}\int_0^t(\phi_{x}^2)_t(0,\tau)d\tau&=-\frac{s\chi}{2D}\left[\phi_x^2(0,t)-\phi_x^2(0,0)\right]\\
&\leq\epsilon\int_0^\infty \phi_{xx}^2dx+C\left(\|\psi_{0x}\|_{w_0}^2+\|\phi_0\|^2_{1,w_0}+\|\phi_{xx}\|^2
+\|\psi_0\|_1^2+e^{-\frac{s\b}{D}}+\delta\right),
\end{split}\end{equation*}
and by \eqref{2}
\begin{equation*}\begin{split}
\frac{\chi}{2D}\int_0^t\frac{(\phi_{x}^2)_t\psi_x}{U}(0,\tau)d\tau&=\frac{\chi}{2D}\left[\left(\frac{\phi_x^2\psi_x(0,t)}{U}-\frac{\phi_x^2\psi_x(0,0)}{U}\right)
-\int_0^t\left(\frac{\phi_x^2\phi_{xx}}{U}+\phi_x^2\psi_x\left(\frac{1}{U}\right)_t\right)\right]\\
&\leq\epsilon N(t)\int_0^\infty (\phi_{xx}^2+\psi_{xx}^2)dx+\epsilon N(t)\int_0^t\int_0^\infty\phi_{xxx}^2dxd\tau\\&\quad+C\left(\|\psi_{0x}\|_{w_0}^2+\|\phi_0\|^2_{1,w_0}+\|\phi_{xx}\|^2
+\|\psi_0\|_1^2+e^{-\frac{s\b}{D}}+\delta\right).
\end{split}\end{equation*}
Then substituting above results into \eqref{3.39n} yields
\begin{equation}\begin{split}\label{3.40}
-\int_0^t\frac{\phi_{xt}\phi_{xx}}{U}(0,\tau)d\tau&\leq\epsilon(2+N(t))\int_0^\infty (\phi_{xx}^2+\psi_{xx}^2)dx+\epsilon(1+N(t))\int_0^t\int_0^\infty\phi_{xxx}^2dxd\tau\\&\quad+C\left(\|\psi_{0x}\|_{w_0}^2+\|\phi_0\|^2_{1,w_0}
+\|\psi_0\|_2^2+e^{-\frac{s\b}{D}}+\delta\right).
\end{split}\end{equation}
Feeding \eqref{3.37}-\eqref{3.40} into \eqref{3.36}, we obtain
\begin{equation}\begin{split}\label{3.41}
&\ii\left(\frac{1}{2U}-3\epsilon\right)\phi_{xx}^2+\int_0^t\ii \left(\frac{D}{4U}-2\epsilon\right)\phi_{xxx}^2\\
&\leq3\epsilon\int_0^\infty \psi_{xx}^2dx+\frac{2\chi^2}{D}\int_0^t\ii U\psi_{xx}^2+CN(t)\int_0^t\ii\frac{\psi_{xx}^2}{U}\\&\quad+C\left(\|\psi_{0x}\|_{w_0}^2+\|\phi_0\|^2_{2,w_0}+\|\phi_{xx}\|^2
+\|\psi_0\|_1^2+e^{-\frac{s\b}{D}}+\delta\right).\end{split}\end{equation}

We next estimate $\int_0^t\ii U\psi_{xx}^2$. Multiplying  the first equation of \eqref{eqn-2.15} by $\psi_{xx}$, we have
\begin{equation}\label{3.42}
\chi U\psi_{xx}^2+D\phi_{xxx}\psi_{xx}=\phi_{xt}\psi_{xx}-(\chi U_x\psi_x+\chi V_x\phi_x+\chi V\phi_{xx}+\chi(\phi_x\psi_x)_x)\psi_{xx}.
\end{equation}
Since the second equation of \eqref{eqn-2.15} gives
\begin{equation}\label{3.43}
\psi_{xxt}=\phi_{xxx},
\end{equation} it follows that
\[\begin{split}&D\phi_{xxx}\psi_{xx}=D\psi_{xxt}\psi_{xx}=\frac{D}{2}(\psi_{xx}^2)_t,\\
&\phi_{xt}\psi_{xx}=(\phi_{x}\psi_{xx})_t-\phi_{x}\psi_{xxt}=(\phi_{x}\psi_{xx})_t-\phi_{x}\phi_{xxx}.\end{split}\]
Then integrating \eqref{3.42}  over $[0,\infty)\times[0,t]$ and noting that
\begin{equation*}
\begin{split}
|\phi_{x}\phi_{xxx}|\leq \frac{\phi_{x}^2}{4\epsilon}+\epsilon\phi_{xxx}^2, \ \chi(\phi_x\psi_x)_x\psi_{xx}=\chi\phi_x\psi_{xx}^2+\chi\phi_{xx}\psi_x\psi_{xx},
\end{split}\end{equation*}
we have
\begin{equation*}\begin{split}
&\frac{D}{2}\ii\psi_{xx}^2+\chi\int_0^t\ii U\psi_{xx}^2\\
&\leq\frac{D}{2}\ii\psi_{0xx}^2+\frac{D}{4}\ii\psi_{xx}^2+\frac{1}{D}\ii\phi_x^2+\frac{(1+N(t))\chi}{4}\int_0^t\ii U\psi_{xx}^2\\
&\quad+\epsilon\int_0^t\ii\phi_{xxx}^2+C\left(\int_0^t\ii\frac{\psi_x^2}{U}+\int_0^t\ii\frac{\phi_x^2}{U}
+\int_0^t\ii\frac{\phi_{xx}^2}{U}+N(t)\int_0^t\ii\psi_{xx}^2\right)
\end{split}\end{equation*}
where we have used the fact \eqref{Sobolev}. Then it follows from Lemma \ref{H^1 estimate} that
\begin{equation}\label{eqn-2.28}\begin{split}
&\ii\psi_{xx}^2+\int_0^t\ii U\psi_{xx}^2-\epsilon\int_0^t\ii\phi_{xxx}^2\\&
\leq C\left(\|\psi_0\|_2^2+\|\phi_0\|^2_{1,w_0}+\|\psi_{0x}\|_{w_0}^2+e^{-\frac{s\b}{D}}+\delta
+N(t)\int_0^t\ii \psi_{xx}^2\right).
\end{split}\end{equation}
Now multiplying \eqref{eqn-2.28} by $\frac{4\chi^2}{D}$ and adding the resulting inequality to \eqref{3.41}, we get
\begin{equation*}\begin{split}
&\ii\left[\left(\frac{1}{2U}-3\epsilon\right)\phi_{xx}^2+\left(\frac{4\chi^2}{D}-3\epsilon\right)\psi_{xx}^2\right]
+\int_0^t\ii \left[\left(\frac{D}{4U}-2\epsilon-\frac{4\chi^2\epsilon}{D}\right)\phi_{xxx}^2+\frac{2\chi^2}{D} U\psi_{xx}^2\right]\\
&\leq C\left(\|\psi_{0x}\|_{w_0}^2+\|\phi_0\|^2_{2,w_0}+\|\phi_{xx}\|^2
+\|\psi_0\|_1^2+e^{-\frac{s\b}{D}}+\delta+N(t)\int_0^t\ii\frac{\psi_{xx}^2}{U}\right).\end{split}\end{equation*}
Because $1/U(x-st+\a-\b)>1/u_-$ for $x>0$, now choosing $\epsilon\ll1$, we then have
\begin{equation}\begin{split}\label{3.44}
&\ii\frac{\phi_{xx}^2}{U}+\int_0^t\ii \frac{\phi_{xxx}^2}{U}\\
&\leq C\left(\|\psi_{0x}\|_{w_0}^2+\|\phi_0\|^2_{2,w_0}+\|\phi_{xx}\|^2
+\|\psi_0\|_1^2+e^{-\frac{s\b}{D}}+\delta+N(t)\int_0^t\ii\frac{\psi_{xx}^2}{U}\right),\end{split}\end{equation}
and
\begin{equation}\begin{split}\label{3.45}
&\ii\psi_{xx}^2+\int_0^t\ii U\psi_{xx}^2\\&
\leq C\left(\|\psi_0\|_2^2+\|\phi_0\|^2_{2,w_0}+\|\psi_{0x}\|_{w_0}^2+e^{-\frac{s\b}{D}}+\delta
+N(t)\int_0^t\ii \psi_{xx}^2\right).
\end{split}\end{equation}

To finish the proof of $\eqref{eqn-2.31}$, we only need to estimate the term $\int_0^t\ii \frac{\psi_{xx}^2}{U}$ or equivalently $\int_0^t\ii w\psi_{xx}^2$ owing to \eqref{3.15}. Using the same argument as deriving (\ref{eqn-5.21}), we first have from \eqref{3.45} that
\begin{equation}\label{eqn-5.32}\begin{split}
&\int_0^{st-\a+\b}w\psi_{xx}^2+\int_0^t\int_0^{s\tau-\a+\b} w\psi_{xx}^2\\&
\leq C\left(\|\psi_{0xx}\|^2+\|\phi_0\|^2_{2,w_0}+\|\psi_{0x}\|_{w_0}^2+\|\psi_0\|^2_1
+e^{-\frac{s\b}{D}}+\delta+N(t)\int_0^t\ii\frac{\psi_{xx}^2}{U}\right).
\end{split}\end{equation}
Multiplying \eqref{3.43} by $w\psi_{xx}$, we get
\begin{equation}\label{eqn-2.33}
\Big(\frac{w\psi_{xx}^2}{2}\Big)_t-\frac{w_t\psi_{xx}^2}{2}=w\phi_{xxx}\psi_{xx}.
\end{equation}
Integrating \eqref{eqn-2.33} over $(st-\a+\b,+\infty)\times[0,t]$ and using \eqref{wt}, we obtain
\begin{equation}\label{3.49}
\begin{split}
&\frac{1}{2}\ii w\psi_{xx}^2+\frac{s^2}{4D}\int_0^t\int_{s\tau-\a+\b}^{\infty}w\psi_{xx}^2\\
&\leq\frac{1}{2}\int_{-\infty}^{+\infty}w\psi_{0xx}^2+\frac{s^2}{8D}\int_0^t\int_{s\tau-\a+\b}^{\infty}w\psi_{xx}^2
+\frac{2D}{s^2}\int_0^t\int_{s\tau-\a+\b}^{\infty}w\phi_{xxx}^2.
\end{split}
\end{equation}
It then follows from \eqref{3.44}, (\ref{eqn-5.32}), \eqref{3.49} and  \eqref{3.15} that
\begin{equation*}
\begin{split}
&\ii w\psi_{xx}^2+\int_0^t\ii w\psi_{xx}^2\\
&\leq C\left(\|\phi_0\|^2_{2,w_0}+\|\psi_{0x}\|_{1,w_0}^2+\|\psi_0\|^2_2
+e^{-\frac{s\b}{D}}+\delta+ N(t)\int_0^t\ii w\psi_{xx}^2\right).
\end{split}
\end{equation*}
When $N(t)$ is small enough, the above inequality  gives
\begin{equation*}
\ii w\psi_{xx}^2+\int_0^t\ii w\psi_{xx}^2
\leq C\Big(\|\psi_{0x}\|_{1,w_0}^2+\|\phi_0\|^2_{2,w_0}
+\|\psi_0\|_2^2+e^{-\frac{s\b}{D}}+\delta\Big).
\end{equation*}
Substituting the above inequality into \eqref{3.44} gives  the estimate for $\int_0^\infty\frac{\phi_{xx}^2}{U}+\int_0^t\ii\frac{\phi_{xxx}^2}{U}$, and  finish the proof of Lemma \ref{sec} thereof.
\end{proof}

\begin{proof}[Proof of Proposition \ref{a priori estimate}] The desired estimate \eqref{priori} is a direct consequence of  Lemma 3.3, Lemma 3.4 and Lemma 3.5. \end{proof}

\begin{proof}[Proof of Theorem \ref{global existence}]
In fact we only need to prove \eqref{long-time behavior} in Theorem \ref{global existence} since the rest of assertions follows from Proposition \ref{a priori estimate} directly. From the global estimate \eqref{priori}, we have
\begin{equation*}
\norm{\phi_x(\cdot,t),\psi_x(\cdot,t)}_{1,w}\to 0 ~~\text{as}~~t\to+\infty.
\end{equation*}
Hence, for all $x\in\R_+$, it follows that
\begin{equation*}
\begin{split}
\phi_x^2(x,t)=2\int_{x}^\infty\phi_x\phi_{xx}(y,t)dy
             \leq 2\left(\int_0^\infty\phi_x^2dy\right)^{1/2}\left(\int_0^\infty\phi_{xx}^2dy\right)^{1/2}
             \leq\norm{\phi_x(\cdot,t)}_{1,w}^2\to 0
\end{split}
\end{equation*}
as $t\to+\infty$. Applying the same procedure to $\psi_x$ leads to
\begin{equation*}
\sup\limits_{x\in\R_+}|\psi_x(x,t)|\to0 \text{ as } t\to+\infty.
\end{equation*}
Hence \eqref{long-time behavior} is proved. \end{proof}

\subsection{Proof of main results} We are ready to prove our main results stated in section 2. First Theorem  \ref{thm-1} is a direct consequence of Theorem \ref{global existence} and Lemma \ref{lem2}. Hence it remains only to prove Theorem  \ref{mainth2} by passing the results from $v$ to $c$.

\begin{proof}[Proof of Theorem  \ref{mainth2}]
Recalling the transformation \eqref{transformation} and \eqref{ict}, we have
\[(\ln c_0(x)-\ln \mathcal{C}(x-\b))_x=-\mu(v_0(x)-V(x-\b))=-\mu\Psi_{0x},\]
which gives
\[\ln c_0(x)-\ln \mathcal{C}(x-\b)=-\mu\Psi_{0}.\]
Thus, the assumptions in Theorem \ref{mainth2} verify those of Theorem \ref{thm-1}, and as a result  the problem $\eqref{ph}$-$\eqref{new-boundary data}$ has a unique global solution $(u,v)(x,t)$ satisfying \eqref{regularity} and the asymptotic behavior \eqref{asym}.

We next derive the results for $c$ from $v$. By the second equation of \eqref{omn}, we get
\begin{equation}\label{3.53}
c(x,t)=c_0(x)e^{-\mu\int_0^tu(x,\tau)d\tau}.
\end{equation}
Thus $c(x,t)$ exists globally, and by \eqref{regularity} and \eqref{transformation}, it holds that
\begin{equation*}
\frac{c_x}{c}(x,t)-\frac{\mathcal{C}_x}{\mathcal{C}}(x-st+\a-\b) \in C([0,\infty);H_w^1) \cap L^2((0,\infty);H_w^1).
\end{equation*}
Owing to the fact that $u(\infty,t)=0$, it is easy to see from \eqref{3.53} that
\[c(\infty,t)=c_0(\infty)=c_+, \forall \ t>0.\]
By the transformation \eqref{transformation} and \eqref{2-5}, one deduces that
\[(\ln c(x,t)-\ln \mathcal{C}(x-st+\a-\b))_x=-\mu(v(x,t)-V(x-st+\a-\b))=-\mu\psi_x.\]
Hence
$$c(x,t)=\mathcal{C}(x-st+\a-\b)e^{-\mu\psi(x,t)}.$$
By the Taylor expansion, we then have
\[\begin{split}
c(x,t)-\mathcal{C}(x-st+\a-\b)&=\mathcal{C}(x-st+\a-\b)(e^{-\mu\psi(x,t)}-1)\\&
=\mu\mathcal{C}(x-st+\a-\b)\psi(x,t)\sum_{n=1}^{\infty}\frac{(-1)^{n}}{n!}(\mu\psi)^{n-1}(x,t).\end{split}\]
By Theorem \ref{global existence}, $\|\psi\|_{C([0,\infty); H^2)}$ is small, which implies the series $\sum_{n=1}^{\infty}\frac{(-1)^{n}}{n!}(\mu\psi)^{n-1}(x,t)$ is convergent. Hence
\[c(x,t)-\mathcal{C}(x-st+\a-\b) \in C([0,\infty);H^2).\]
It remains to derive the asymptotic behavior of $c$. By Theorem \ref{global existence}, we have  $\|\psi_x(t)\|_{1, w}\to 0$ as $t \to \infty$ and $\|\psi(t)\|$ is bounded for all $t>0$. Then
\begin{eqnarray*}
\begin{aligned}
\psi^2(x,t)=2\int_{x}^\infty \psi \psi_y(y,t)dy
\leq 2 \left(\int_0^\infty \psi^2dy\right)^{1/2} \left(\int_0^\infty\psi_y^2dy\right)^{1/2} \to 0 \ \text{as} \ t \to \infty,
\end{aligned}
\end{eqnarray*}
which implies $\sup\limits_{x\in\R_+}|\psi(x,t)| \to 0$ as $t \to \infty$. Note that $\mathcal{C}(x-st+\a-\b)$ is bounded by $c_+>0$. Then
\begin{eqnarray*}
\begin{aligned}
\sup\limits_{x\in\R_+}|c(x,t)-\mathcal{C}(x-st+\a-\b)|&=\sup\limits_{x\in\R_+}\mathcal{C}(x-st+\a-\b)|e^{-\mu\psi(x,t)}-1|\\&\leq \sup\limits_{x\in\R_+}c_+|e^{-\mu\psi(x,t)}-1|\to 0 \ \mathrm{as}\ t \to \infty.
\end{aligned}
\end{eqnarray*}
This completes the proof of Theorem  \ref{mainth2}.
\end{proof}

\section*{Acknowledgements}
The authors are grateful to the referee's many insightful comments which lead to improvements of this manuscript.  J. Li's work was partially supported by the National Science Foundation of China (No. 11571066). He is also grateful for the hospitality of Hong Kong Polytechnic University where part of this work was done. The research of Z. Wang was supported by
the Hong Kong RGC GRF grant No. PolyU 153032/15P.

\end{document}